\xpatchcmd{\@thm}{\thm@headpunct{.}}{\thm@headpunct{}}{}{}
\newcommand\bsquare{$\blacksquare$}
\newtheorem{theorem}{Theorem}[section]
\newtheorem{lem}[theorem]{Lemma}
\newtheorem{prop}[theorem]{Proposition}
\newtheorem{defn}[theorem]{Definition}
\declaretheoremstyle[
spaceabove=6pt, spacebelow=6pt,
headfont=\normalfont\bfseries,
notefont=\m{\rm d}series, notebraces={(}{)},
bodyfont=\normalfont,
headpunct={},
postheadspace=1em,
qed=\bsquare
]{examplestyle}
\theoremstyle{definition}
\newtheorem{rem}[theorem]{Remark}
\newcommand{\eukao}{\mathrm{E}^{\kappa_1}}
\newcommand{\eukat}{\mathrm{E}^{\kappa_2}}
\newcommand{\iup}{\iota}
\newcommand{\Cay}{\Phi}
\newcommand{\RM}{Rotation Minimizing\ }
\newcommand{\FS}{Frenet--Serret\ }
\newcommand{\hide}[1]{}
\renewcommand\thmcontinues[1]{cont.}
\definecolor{sapgreen}{rgb}{0.31, 0.49, 0.16}
\title{On the use of the \RM  Frame for Variational Systems with Euclidean Symmetry}
\author[]{ E. L. Mansfield}   
\author[]{A. Rojo-Echebur\'{u}a}
\affil[]{SMSAS, University of Kent, Canterbury, CT2 7FS, UK}
\date{}
\begin{document}
\maketitle

\begin{abstract} 
We study variational systems for space curves, for which the Lagrangian or action principle has a Euclidean symmetry, using the \RM  frame, also known as the 
Normal, Parallel or Bishop frame (see  \cite{Bishop}, \cite{WangJoe}).  

  Such systems have previously been studied using the Frenet--Serret frame. However, the \RM frame has many advantages,
   and can be used to study a wider class of examples.

We achieve our results by extending the powerful symbolic invariant calculus for Lie group based
moving frames, to the \RM frame case. To date, the invariant calculus has been developed for frames defined 
by algebraic equations. By contrast, the \RM frame is defined by a differential equation.

In this paper, we derive the recurrence formulae for
the symbolic invariant differentiation of  the symbolic invariants. We then derive
  the syzygy operator needed to obtain Noether's conservation laws as well as the Euler--Lagrange equations directly in terms of the invariants,
    for variational problems
with a Euclidean symmetry. We show how to
 use the six Noether laws to ease the integration problem for the minimizing curve, once the Euler--Lagrange equations
have been solved for the generating differential invariants. 
Our applications include variational problems used in the study of strands of proteins, nucleid acids and polymers.
\end{abstract}

\noindent\textbf{Key words} Rotation Minimizing frame, Calculus of Variations, Differential invariants, moving frames. 

\section{Introduction}

The study of variational problems with Euclidean symmetry is an old problem, indeed, Euler's 1744 study of elastic beams 
is such a case. However, methods to analyse such problems efficiently and effectively, are still of interest. 

In this paper, we consider variational problems for curves in 3-space for which the Lagrangian is invariant under 
the special Euclidean group $SE(3)=SO(3)\ltimes \mathbb{R}^3$ acting linearly in the standard way, that is,
\begin{equation}\label{SE3act}
\begin{pmatrix} x\\y\\z\end{pmatrix} \mapsto R\begin{pmatrix} x\\y\\z\end{pmatrix} + \begin{pmatrix} a\\b\\c\end{pmatrix},\qquad R\in SO(3).
\end{equation}
The Euler--Lagrange equations satisfied by the extremising curves have $SE(3)$ as a Lie symmetry group, and can be therefore be written
in terms of the differential invariants of the action, and their derivatives with respect to arc-length.  Further, the six dimensional space of Noether's laws
are key to analysing the space of extremals. 

 To date, the \FS frame has been used to analyse Euclidean invariant variational problems, and this requires that the Lagrangian can be written in terms of the Euclidean curvature and torsion.  Because the \FS frame can be derived using \textit{algebraic} equations (at each point) on the relevant jet bundle, 
 the powerful symbolic calculus of invariants can be used, to obtain not only the Euler--Lagrange equations directly in terms of the curvature and torsion, but the full set of Noether's laws can also be written down directly using both the invariants and the frame \cite{GonMan2}.
 
Let us denote the space curve as $s\mapsto P(s)\in\mathbb{R}^3$, where $s$ is arc-length, and the tangent vector to this curve by $P'$, so that
$'={\rm d}/{\rm d}s$. By the definition of arc-length, $|P'|^2=P'\cdot P'=1$. Then provided $P''\ne 0$, the left Frenet--Serret frame is given 
by 
\begin{equation}\label{FSdef}
\sigma_{FS}^{\ell}=\begin{pmatrix} P'(s) & \frac{P''(s)}{|P''(s)|} & \frac{P'(s) \times P''(s)}{|P''(s)|}\end{pmatrix} \in SO(3).
\end{equation}

 From a computational point of view, the \FS frame is convenient as it can be computed straightforwardly at arbitrary points along the curve. However, it is undefined wherever the curvature is degenerate, such as at inflection points or along straight sections of the curve. 
 The left \FS frame is \textit{left equivariant}, that is, if at any point $z=P(s)$ on the curve, since 
 $R\in SO(3)$ acts linearly in the standard way on the tangent space $T_z\mathbb{R}^3$, then it is readily seen that
 \[ \sigma_{FS}^{\ell} \mapsto \begin{pmatrix} R P'(s) & \frac{R P''(s)}{| R P''(s)|} & \frac{R P'(s) \times R P''(s)}{| R P''(s)|}\end{pmatrix}=R \sigma^{\ell}_{FS}.\]
  The Euclidean curvature $\kappa$ and the torsion $\tau$ at the point $P(s)$ are then the nonzero components of the invariant so-called curvature matrix, specifically, 
  \begin{equation}\label{FScurvMx} \left(\sigma^{\ell}_{FS}\right)^{-1} \left(\sigma^{\ell}_{FS}\right)' = \begin{pmatrix}0 & -\kappa &0\\ \kappa &0&-\tau\\ 0&\tau&0\end{pmatrix}.\end{equation}
 
 In contrast to this frame, \textit{relatively parallel} frames were described by \cite{Bishop} who detailed what is now known variously as the Normal, Parallel, Bishop or \RM frame. 
 The \RM frame has many advantages over the \FS frame. First of all, unlike the \FS frame, the \RM frame is
 defined at all points of a smooth curve. 
The \RM frame may be used to study a larger class of variational problems, because
while the generating invariants for the symbolic invariant calculus given by the \FS frame, curvature and torsion, are of order
2 and 3 respectively, those given by the \RM  frame are both of order only 2. Finally, 
 the \RM frame, its computation, approximation and its applications, have been extensively used and studied in the Computer Aided Design literature,
 \cite{BR,F1,F2,F3,G,H,K,PFL,PW,SW,WJZL}. 
 One reason is that the sweep surfaces they generate are, in general, superior, \cite{WangJoe}; as illustrated in Figure \ref{sweep1}, sweep surfaces
generated from the \FS frame can exhibit strong twisting at inflection points.

Bishop, \cite{Bishop}, defines a normal vector field along a curve to be \textit{relatively parallel} if its derivative is proportional to the tangent vector. 
The equation used in the Computer Aided Design literature for the relatively parallel normal 
vector $V=V(s)$ to the curve $s\mapsto P(s)$ is \cite{WangJoe},
\begin{equation}\label{Vdef} V' = -(P''\cdot V) P'.\end{equation}
The function of proportionality between $V'$ and $P'$ is chosen to guarantee that, without loss of generality,
we may suppose that $|V|\equiv 1$ and $P'\cdot V\equiv 0$, see Proposition \ref{propVprops}.
Then the left \RM frame is
\begin{equation}\label{RMdef}
\sigma_{RM}^{\ell} = \begin{pmatrix} P' & V & P'\times V\end{pmatrix}.
\end{equation}
We  have that $\sigma_{RM}^{\ell}$ is left equivariant and, as shown by Bishop, the invariant curvature matrix $\left(\sigma^{\ell}_{RM}\right)^{-1}\left(\sigma^{\ell}_{RM}\right)'$ takes the form
\begin{equation}\label{RMcurvMx} \left(\sigma^{\ell}_{RM}\right)^{-1}\left(\sigma^{\ell}_{RM}\right)'= \begin{pmatrix} 0 & -\kappa_1 & -\kappa_2\\\kappa_1 & 0 &0\\ \kappa_2 &0&0\end{pmatrix},
\end{equation}
that is, where the $(2,3)$-component is guaranteed to be zero.

\begin{figure}
  \caption{Given a curve in  space, we compare the \FS frame with the \RM frame along it. 
  \label{sweep1}}
  \centering
  
\[
\begin{array}{cc}
\includegraphics[scale=0.32]{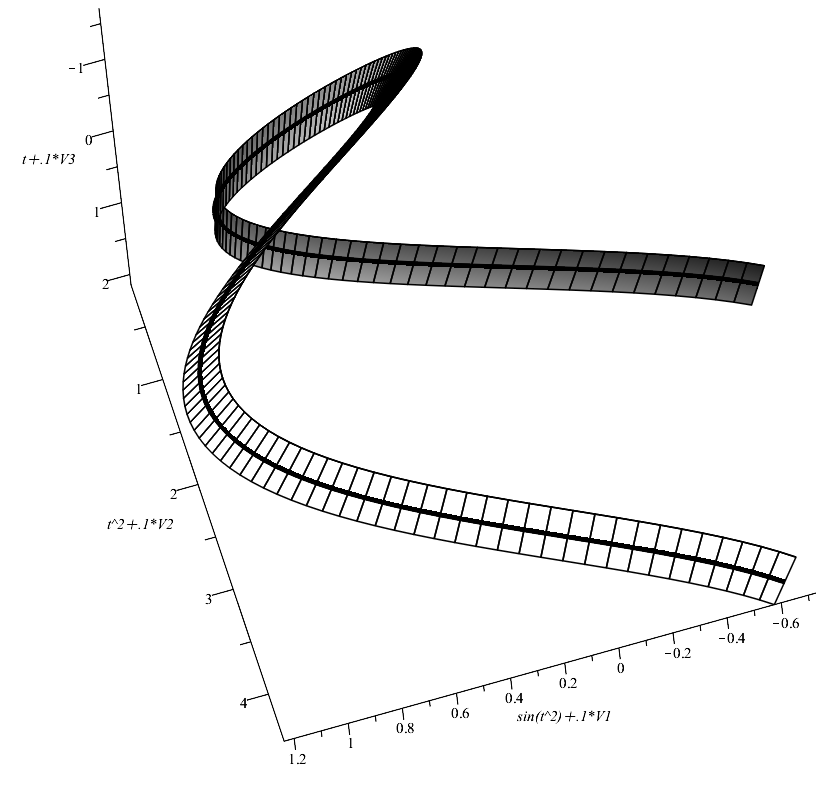} &
\includegraphics[scale=0.3]{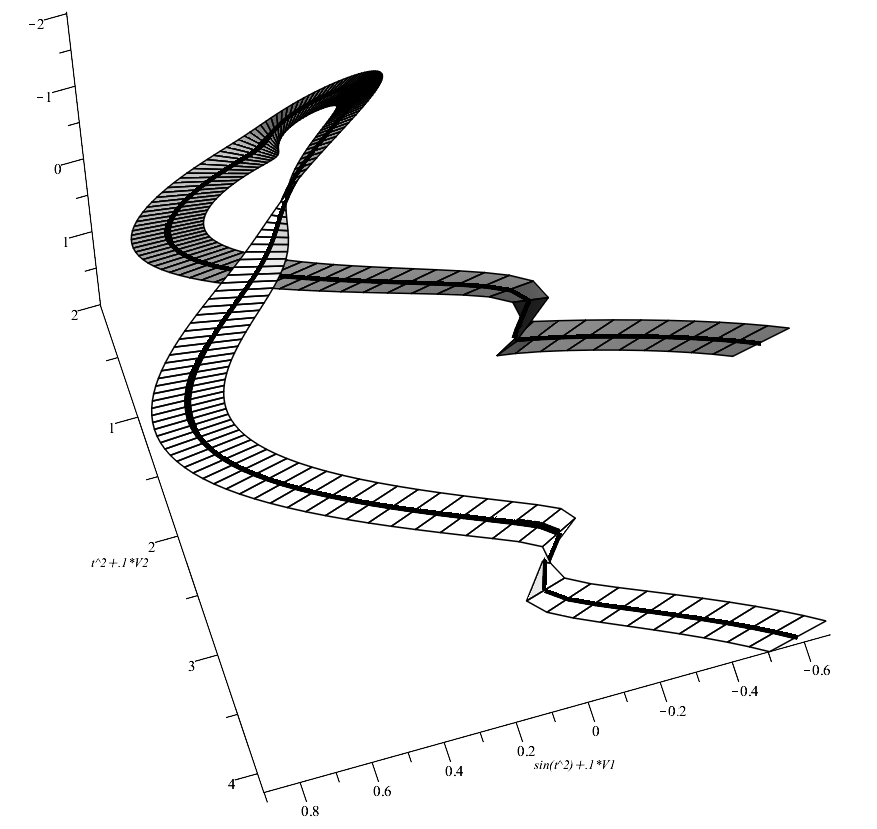}\\[9pt]
\text{Surface sweeping given by} \ V & \text{Surface sweeping given by} \ P''\\
\text{using the \RM frame} & \text{using the \FS frame}
\end{array}
\]
  
\end{figure}

Since both the \RM and the \FS frames share the same first column, we have for some angle $\theta=\theta(s)$, (see Figure \ref{gaugeFSRM}), 
\begin{equation}\label{sigmarm}
\sigma^{\ell}_{RM} = \sigma^{\ell}_{FS} \left( \begin{array}{ccc} 1 & 0 & 0\\ 0 & \cos{\theta} & \sin{\theta} \\ 0 & -\sin{\theta} & \cos{\theta} \end{array}\right).
\end{equation}

\begin{figure}
  \caption{\label{gaugeFSRM}Diagram of a \RM frame and a \FS frame of a curve $P(s)$ in $\mathbb{R}^3$. Note that $P'(s)$ is common in both frames.}
  \centering
    \includegraphics[width=0.8\textwidth]{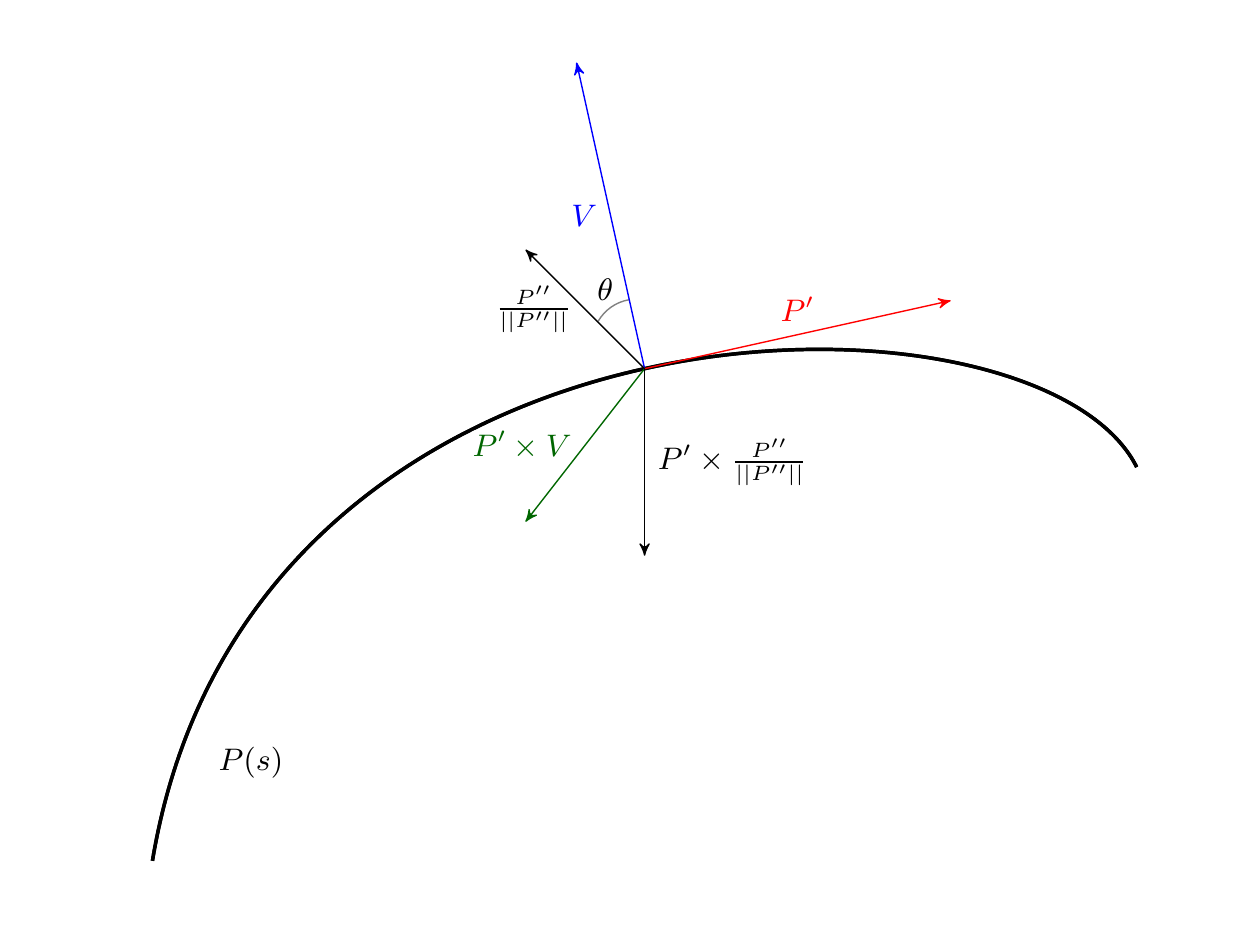}
\end{figure}

Calculating $\left(\sigma^{\ell}_{RM}\right)^{-1} \left(\sigma^{\ell}_{RM}\right)'$, using \eqref{FScurvMx}, and \eqref{sigmarm},   and comparing the result to \eqref{RMcurvMx} leads to
the well known relations,
\begin{equation}\label{k1k2} \kappa_1=\kappa\cos\theta,\qquad \kappa_2 = \kappa\sin\theta, \qquad \theta_s = \tau.\end{equation}

Treating the \RM frame as a gauge transformation of the \FS frame, together with

\[
 \theta(t) - \theta_0 = \int^t_{t_0} \tau(t) |P'(t)| \  {\rm d}t
\]
has been proven to lack numerical robustness for a general space curve, (see \cite{G}).
This makes the use of the \RM frame defined in terms of the normal vector $V$, as in \eqref{RMdef}, 
to be a better choice in the application literature, and is our choice here. 

The theory and applications of Lie group based moving frames are now well established, and provides an invariant calculus to study differential systems that are either invariant or equivariant under the action of a Lie group, (see the graduate text \cite{Mansfield:2010aa} and references therein).
Beginning with \cite{FO2,FO1} who found the recurrence formulae for the invariant differentiation of  a computable set of generating invariants,
given algebraic equations for the frame, 
there is now a rigorous and constructive symbolic invariant calculus,
\cite{hubertAA,hubertAD,hubertAC} and \cite{hubertB,hubertA}. This calculus has been applied to study problems
in the calculus of variations where the Lagrangian has a Lie group symmetry \cite{kogan,GonMan, GonMan2, GonMan3}.
The case of Euclidean invariance using the \FS frame was studied by \cite{GonMan2}. 

The formulae for the recurrence relations in the symbolic invariant calculus require the equations defining the frame to be algebraic at each point in the domain of the frame, and indeed,
the equations defining the \FS frame,
despite involving the components of $P(s)$, $P'(s)$ and $P''(s)$, are algebraic at each point of the relevant jet bundle.
However, the recurrence formulae for the invariant derivatives defined using the \RM frame need to be derived in another way, because the equations defining the frame are not algebraic
in the jet variables. Indeed, considering \eqref{k1k2}, it would seem that the \RM frame is defined by a relation on the
invariants, $\tau$ and $\theta_s$, or, a differential equation on an extended space, one which includes either $\theta$, or $V$.

Our approach  is to extend the manifold on which the group acts, to include the vector $V$ and its derivatives, in such a way that
the differential equation defining $V$ is a simple constraint for our variational problem. Because the group acts linearly on $P'$, $V$ and their derivatives,
it turns out to be straightforward to write down a set of generating invariants, the recurrence formulae for their invariant differentiation and their differential
syzygies. With these to hand, the methods used by \cite{GonMan2} can be adapted to obtain  Euler--Lagrange equations directly in terms of the invariants
and to write down the six Noether conservation laws.

In \S \ref{MFsection}, we introduce the  notions of a Lie-group based moving frame, the frame-defined invariants and the curvature matrices.
The symbolic invariantized form of the curvature matrices for the \RM frame are found, and  we derive the recurrence formulae for the symbolic differential invariants and the syzygy operator we will need in the sequel. In \S \ref{ELsection}, we obtain the Euler--Lagrange equations
and Noether's laws for a Lagrangian with a Euclidean symmetry, using the results of \S \ref{MFsection}. In \S \ref{IPsection}, the use of Noether's laws to ease the integration problem is carried out. In \S \ref{EXsection}, some examples and applications are presented. 
The final section, \S \ref{CCsection} is devoted to our conclusions and the implication of our results to extending the range of applicability of the symbolic invariant calculus to frames not defined algebraically.

\section{Moving frames}\label{MFsection}

\begin{defn}\label{leftact}
Given a Lie group $G$ and a manifold $M$, a \emph{left} action of $G$ on $M$ is defined to be a smooth map
\[G\times M\rightarrow M, \qquad (g, z) \mapsto g\cdot z \]
such that
\[ g\cdot (h\cdot z) = (gh)\cdot z.\]
\end{defn}

\begin{defn}[Moving Frame]
Given a smooth left Lie group action $G\times M\rightarrow M$, a moving frame on the domain $\mathcal{U}\subset M$ is an equivariant map $\rho: \mathcal{U}\rightarrow G$, that is
\[\begin{array}{ll}\rho(g\cdot z) = g\rho(z) &\qquad  \mbox{\emph{left equivariance}}\\
\rho(g\cdot z)=\rho(z)g^{-1} &\qquad \mbox{\emph{right equivariance}}\end{array}\]
\end{defn}
The frame is called left or right accordingly.  Given a left frame, its (group) inverse is a right frame, and vice versa. In practice, the ease of calculation can differ considerably depending on the choice of parity.

Moving frames exist when the action is free and regular on its domain
$\mathcal{U}\subset M$. This means, the orbits foliate $\mathcal{U}$, and for any cross section $\mathcal{K}\subset M$ which is transverse to the orbits $\mathcal{O}(z)$, the set $\mathcal{K}\cap \mathcal{O}(z)$ has just one element, the projection of $z$ onto $\mathcal{K}$, (see \cite{Mansfield:2010aa} for full details).

The standard method to calculate a moving frame for the group action on a neighbourhood $\mathcal{U}\subset M$ of $z$ is as follows.
Using a  cross-section $\mathcal{K}$,  given by a system of equations $\psi_r(z)=0$, for $r=1,\ldots,R,$ where $R$ is the dimension of the group $G$,  one then solves the so-called normalization equations,
\begin{equation}\label{frameEqA}
\psi_r(g\cdot z) = 0, \qquad r=1,\ldots, R,
\end{equation}
for $g$ as a function of $z$.
The solution is the group element $g=\rho(z)$ that maps $z$ to its projection on $\mathcal{K}$.
The conditions on the action above are those for the Implicit Function Theorem to hold (see \cite{Hirsch}), so the solution $\rho$ is unique. A consequence of uniqueness is that
$$
\rho(g\cdot z)=\rho(z) g^{-1},
$$
that is, the frame is  \textit{right equivariant}, as both $\rho(g\cdot z) $ and $ \rho(z) g^{-1}$ solve the equation $\psi_r\left(\rho(g\cdot z)\cdot \left(g\cdot z\right)\right)=0$.

The equivariance of the frame enables one to obtain invariants of the group action. 
\begin{lem}[Normalized Invariants]
Given a left or right action $G\times M \rightarrow M$ and a \textit{right} frame $\rho$, then
 $\iup(z)=\rho(z)\cdot z$, for $z$ in the domain of the frame $\rho$, is invariant under the group action.
\end{lem}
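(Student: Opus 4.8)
The plan is to verify directly that $\iup(z) = \rho(z)\cdot z$ is unchanged when $z$ is replaced by $g\cdot z$, treating the two cases (left action, right action) separately but in parallel, since in both cases the key input is the \emph{right} equivariance of $\rho$, namely $\rho(g\cdot z) = \rho(z)g^{-1}$. First I would fix $g\in G$ and $z$ in the domain of $\rho$ — noting that one should really check $g\cdot z$ is still in the domain, but since the domain $\mathcal{U}$ is $G$-invariant (it is a union of orbits on which the action is free and regular) this is automatic. Then I would compute $\iup(g\cdot z) = \rho(g\cdot z)\cdot(g\cdot z)$ and substitute the right-equivariance relation to get $\iup(g\cdot z) = (\rho(z)g^{-1})\cdot(g\cdot z)$.

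From here the computation forks. In the case that $G\times M\to M$ is a \emph{left} action, I would use associativity of the left action (Definition \ref{leftact}): $(\rho(z)g^{-1})\cdot(g\cdot z) = \bigl(\rho(z)g^{-1}g\bigr)\cdot z = \rho(z)\cdot z = \iup(z)$. In the case of a \emph{right} action the bookkeeping of which factor acts first is reversed, but the cancellation of $g^{-1}$ against $g$ is identical in spirit; one writes the right-action associativity law $(z\cdot h)\cdot k = z\cdot(hk)$ in the appropriate order and again the $g^{-1}g$ collapses, leaving $\rho(z)\cdot z$. Either way the conclusion is $\iup(g\cdot z)=\iup(z)$, which is exactly invariance.

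I do not expect any genuine obstacle here: the lemma is essentially a one-line cancellation once right equivariance is in hand, and right equivariance has already been established in the excerpt for frames obtained by solving normalization equations. The only points that warrant a sentence of care are (i) confirming that the domain of $\rho$ is $G$-stable so that $\iup(g\cdot z)$ makes sense, and (ii) being explicit that it is the \emph{right}-equivariance property — not left — that makes the two group factors cancel, which is precisely why the lemma is stated for a right frame even when the underlying action may be either left or right. A remark could be added that, dually, a \emph{left} frame produces invariants via $z\mapsto \rho(z)^{-1}\cdot z$ by the same cancellation.
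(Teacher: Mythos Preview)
Your argument is correct and is exactly the standard one-line verification; the paper in fact states this lemma without proof (it is quoted as background from \cite{Mansfield:2010aa}), so there is no alternative approach to compare against. Your computation for the left-action case is complete, and your remark that the right-action case is the same cancellation with the factors reordered is adequate, though in a polished write-up you might spell out the right-action equivariance convention explicitly rather than leave it ``in spirit''.
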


\begin{defn}\label{normInvsnames} The normalized, or frame-defined, invariants are the components of $\iup(z)$.
\end{defn}

When the frame is not known explicitly,  the normalized invariants are said to be known symbolically. The power of the symbolic invariant calculus
derives from the fact that these symbolic invariants can be used effectively in a wide range of calculations.
We now state the Replacement Rule, from which it follows that the normalized invariants provide a set of generators for the algebra of invariants.

\begin{theorem}[Replacement Rule]\label{reprule}
Given a right moving frame on $M$ for the action $G\times M\rightarrow M$,
and an invariant $F(z)$ of this  action, then $F(z)=F(\iup(z))$.
\end{theorem}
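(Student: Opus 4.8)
The plan is to exploit the defining property of an invariant together with the fact that a moving frame assigns to each point a \emph{specific} group element, and then simply substitute that group element into the invariance identity; no more than a one-line substitution is really needed, so the work is in making the quantifiers precise.

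First I would pin down what ``invariant'' means in this context: a function $F$ defined on a $G$-invariant subset of $M$ is invariant under the action if $F(g\cdot z)=F(z)$ for \emph{every} $g\in G$ and every $z$ in the domain of $F$. The point is that this identity holds for all group elements simultaneously, hence it continues to hold when $g$ is replaced by any element of $G$ that is allowed to depend on $z$.

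The key step is then to specialise $g$ to the frame value at $z$. Since $\rho\colon\mathcal{U}\to G$ is a map, $\rho(z)$ is a bona fide element of $G$ for each $z\in\mathcal{U}$, so the invariance identity evaluated at $g=\rho(z)$ reads $F(\rho(z)\cdot z)=F(z)$. By the definition of the normalized invariants, $\iup(z)=\rho(z)\cdot z$, and therefore $F(\iup(z))=F(z)$, which is exactly the assertion. I would remark that the parity of the frame plays no role in this bare equality: only the fact that $\rho$ is a well-defined map $\mathcal{U}\to G$ is used. Right-equivariance of $\rho$ is what guarantees, via the preceding Lemma, that $\iup$ is itself invariant, but that is a separate point from the equality $F(z)=F(\iup(z))$.

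The only thing requiring a little care — and the closest thing to an obstacle — is the bookkeeping about domains: one must check that $z\in\mathcal{U}$ so that $\rho(z)$ is defined, and that both $z$ and $\iup(z)=\rho(z)\cdot z$ lie in the domain on which $F$ is assumed invariant. Since $\iup(z)$ lies on the same $G$-orbit as $z$, and the domain of an invariant is taken to be $G$-invariant, this is automatic, so there is no genuine difficulty. The substantive content of the Replacement Rule is thus conceptual rather than technical: it licenses substituting the symbolic invariants $\iup(z)$ for the coordinates $z$ inside any invariant expression, which is precisely what makes the symbolic invariant calculus usable when $\rho$ is not known explicitly.
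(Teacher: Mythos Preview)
Your argument is correct and is the standard one-line proof: since $F(g\cdot z)=F(z)$ for every $g\in G$, specialising $g=\rho(z)$ gives $F(\iup(z))=F(\rho(z)\cdot z)=F(z)$. The paper itself does not supply a proof of this theorem; it is quoted as a known result (see the cited text \cite{Mansfield:2010aa}), so there is nothing to compare against, but what you wrote is exactly the intended justification.
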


\begin{defn}[Invariantization]\label{invOpDef} Given a right moving frame $\rho$, the map $z\mapsto \iup(z)=\rho(z)\cdot z$ is called the \emph{invariantization} of the point $z$, and the map
$F(z)\mapsto F(\iup(z))$, 
is called the \emph{invariantization} of $F$.\end{defn}

In this paper, we will consider derivatives with respect to arc-length $s$ of our curve $s\mapsto P(s)$, where we note that arc-length is a Euclidean invariant, and we will also consider the 
evolution of this curve with respect to a `time' parameter $t$, which we declare to be invariant under our $SE(3)$ action.   In general, if the independent variables of our curves and surfaces, with respect to which we differentiate, are \textit{all} invariant, we may make the following definition.

\begin{defn}[Curvature matrices]
The curvature matrix with respect to the independent variable $x$ is defined to be, for a right frame $\rho$,
\[
Q^x=\left(\frac{{\rm d}}{{\rm d} x} \rho\right) \rho^{-1}.
\]
\end{defn}
The non-constant components of the curvature matrices are differential invariants of the action. These are
referred to as the \textit{curvature invariants} or the Maurer--Cartan invariants.

\subsection{The extended right \RM frame}

Since the symbolic invariant calculus is standardly carried out for a right frame, we consider
a right \RM frame, $\rho_{RM}$, which we need for our application to include the translation component of the 
Special Euclidean group $SE(3)$. We consider 
the Lie group $SE(3)$ to act on an enlarged  manifold (jet bundle) having local coordinates to be the components of
\[ P, P', P'', \dots , P^{(n)}=\frac{{\rm d}^n}{{\rm d}s^n} P, \dots, V, V', V'',\dots V^{(n)}=\frac{{\rm d}^n}{{\rm d}s^n} V, \dots\]
where the left action is, for $g=(R, \mathbf{a})\in SE(3)=SO(3)\ltimes \mathbb{R}^3$,
\[ P\mapsto R P + \mathbf{a},\qquad  P^{(n)}\mapsto R P^{(n)}, n>0, \qquad V^{(n)}\mapsto R V^{(n)}, n\ge 0.\]
In the standard representation of $SE(3)$ in $GL(4,\mathbb{R})$,
\[ g=(R, \mathbf{a})\mapsto \begin{pmatrix} R & \mathbf{a}\\ 0 & 1\end{pmatrix},\]
our \textit{extended right \RM frame} for this action is defined to be, 
\begin{equation}\label{rhoRMdef} \rho_{RM}=\begin{pmatrix} \sigma_{RM} & -\sigma_{RM} P \\ 0 & 1\end{pmatrix}\end{equation}
where 
\begin{equation}\label{defRMright}
\sigma_{RM} = \left(\sigma_{RM}^{\ell}\right)^T\in SO(3).
\end{equation}
The curvature matrix is, by direct calculation and noting that $\sigma_{RM}P'=\begin{pmatrix} 1&0&0\end{pmatrix}^T$, 
\begin{equation}\label{ExCurvMx} Q^s=\rho_{RM}'\rho_{RM}^{-1} = \begin{pmatrix} \sigma_{RM}'\sigma_{RM}^{-1} & \begin{array}{r} -1\\ 0\\ 0\end{array}\\ 0 & \phantom{-}0\end{pmatrix}.\end{equation}

To obtain the complete set of normalized invariants and the (reduced) curvature matrix $\sigma_{RM}'\sigma_{RM}^{-1}$, 
we first consider solutions of the defining equation for $V$.

\begin{prop}\label{propVprops}
Given a curve $s\mapsto P(s)\in \mathbb{R}^3$ such that $P'\cdot P'=|P'|^2=1$, and suppose that $V=V(s)$ satisfies equation \eqref{Vdef}, 
which for convenience we give again here,
\begin{equation}\label{NVdef}
V' =  -(P''\cdot V) P'
\end{equation}
together with the initial conditions $V(s_0)=1$, $V(s_0)\cdot P'(s_0)=0$. Then
\begin{enumerate}
\item $V\cdot P' \equiv 0$
\item $ V\cdot V\equiv 1$
\item For any constant $\psi\in\mathbb{R}$, 
\[ W=\cos\psi\, V + \sin\psi\, P'\times V\]
also solves equation \eqref{NVdef} with $|W|\equiv1$ and $W\cdot P'\equiv 0$
\end{enumerate}
\end{prop}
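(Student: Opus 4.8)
The plan is to establish the three claims in order, since (2) uses (1), and (3) uses both through the fact that $\{P',V,P'\times V\}$ is an orthonormal frame. For (1), I would differentiate the scalar $s\mapsto V(s)\cdot P'(s)$ along the curve: using \eqref{NVdef} and $P'\cdot P'=1$,
\[
\frac{\rm d}{{\rm d}s}\bigl(V\cdot P'\bigr)=V'\cdot P'+V\cdot P''=-(P''\cdot V)(P'\cdot P')+V\cdot P''=0,
\]
so $V\cdot P'$ is constant, and the initial condition $V(s_0)\cdot P'(s_0)=0$ forces $V\cdot P'\equiv0$. For (2), differentiate $s\mapsto V(s)\cdot V(s)$: by \eqref{NVdef} and claim (1),
\[
\frac{\rm d}{{\rm d}s}\bigl(V\cdot V\bigr)=2V'\cdot V=-2(P''\cdot V)(P'\cdot V)=0,
\]
so $V\cdot V$ is constant, and the initial condition $|V(s_0)|=1$ gives $V\cdot V\equiv1$.

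For (3), I would first record the consequence of (1) and (2) together with $|P'|=1$: the triple $\{P',V,P'\times V\}$ is a right-handed orthonormal basis of $\mathbb{R}^3$ at every $s$, so in particular $(P'\times V)\cdot(P'\times V)=1$ and $(P'\times V)\cdot P'=(P'\times V)\cdot V=0$. From this, $|W|^2=\cos^2\psi\,|V|^2+2\cos\psi\sin\psi\,V\cdot(P'\times V)+\sin^2\psi\,|P'\times V|^2=1$ and $W\cdot P'=\cos\psi\,(V\cdot P')+\sin\psi\,(P'\times V)\cdot P'=0$ are both immediate.

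The substantive point is that $W$ again solves \eqref{NVdef}. Differentiating the defining expression for $W$ and using \eqref{NVdef} for $V$, together with $P'\times V'=-(P''\cdot V)(P'\times P')=0$, gives
\[
W'=\cos\psi\,V'+\sin\psi\bigl(P''\times V+P'\times V'\bigr)=-\cos\psi\,(P''\cdot V)P'+\sin\psi\,(P''\times V).
\]
On the other hand, expanding $P''$ in the orthonormal frame and using $P''\cdot P'=\tfrac12({\rm d}/{\rm d}s)(P'\cdot P')=0$ yields $P''=(P''\cdot V)V+(P''\cdot(P'\times V))(P'\times V)$, whence, using $V\times V=0$ and $(P'\times V)\times V=-P'$,
\[
P''\times V=-\bigl(P''\cdot(P'\times V)\bigr)P'.
\]
Substituting into the expression for $W'$ and comparing with $-(P''\cdot W)P'=-\cos\psi\,(P''\cdot V)P'-\sin\psi\,(P''\cdot(P'\times V))P'$ shows $W'=-(P''\cdot W)P'$, as required.

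The whole computation is elementary; the only step that is not a single differentiation is the vector identity $P''\times V=-(P''\cdot(P'\times V))P'$ used in (3), and this is resolved immediately by expanding $P''$ in the orthonormal frame $\{P',V,P'\times V\}$ and discarding the $P'$-component via $P''\cdot P'=0$. I therefore do not anticipate a genuine obstacle, only the bookkeeping of arranging the three parts so that each draws on the previous ones.
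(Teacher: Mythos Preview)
Your proof is correct and follows essentially the same approach as the paper: parts (1) and (2) are identical, and for (3) both arguments reduce to computing $P''\times V$ via an orthogonal expansion. The only difference is cosmetic---you expand $P''$ in the orthonormal frame $\{P',V,P'\times V\}$, whereas the paper expands $V$ in the basis $\{P'',P'\times P''\}$; your choice is marginally cleaner since it avoids implicitly assuming $P''\ne 0$.
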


\begin{proof}
\begin{enumerate}
\item By direct calculation,
the scalar product $V\cdot P'$ is constant with respect to $s$. The result follows from the assumption on the initial data.
\item Equation \eqref{NVdef} implies $V'\cdot V = -(P''\cdot V)(P'\cdot V)=0$  by 1. above. Hence $V\cdot V$ is constant
with respect to $s$. The result follows from the assumption on the initial data.
\item Since \eqref{NVdef} is linear, it suffices to prove that  $W=P'\times V$ also solves Equation \eqref{NVdef}.
We have  by the orthogonality of both $V$ and $P''$ to $V$ that $V= b(s) P'' + c(s) P'\times P''$ for some coefficients $b(s)$, $c(s)$.
Then $P'\times V= b(s) P'\times P'' - c(s)P''$ and
\[\begin{array}{rcl}
(P'\times V)' &=& P''\times V + P'\times V'\\
&=& P''\times V\\ &=& c(s)(P''\cdot P'')  P'.
\end{array}\]
But $P''\cdot (P'\times V)= -c(s) P''\cdot P''$ and hence
\[ W' = -(P'' \cdot W)\cdot  P'\]
as required.
\end{enumerate}
\end{proof}

The proposition shows that if $V$ solves \eqref{NVdef} and for some $s_0$, $V(s_0)$ has unit length and is orthogonal to $P(s_0)$,
then $\sigma_{RM}\in SO(3)$ for all $s$, and this we now assume. 
In the applications, it is necessary to ensure the initial data for $V$ holds when integrating for the frame. The proposition shows further that in fact there is a one-parameter family
of \RM frames, determined by the initial data for $V$.

Let $\mathfrak{so}(3)$ denote the set of $3\times 3$ skew-symmetric matrices, the Lie algebra of $SO(3)$.
We have by direct calculation that
\begin{equation}\label{CurvMat} \sigma_{RM}'\sigma_{RM}^{-1}=\left(\begin{array}{ccc}
0 & P''\cdot V & P''\cdot (P'\times V) \\
-P''\cdot V  & 0 & 0\\
-P''\cdot (P'\times V) & 0 &0\end{array}\right)\in \mathfrak{so}(3)\end{equation}

We now write down the symbolic normalized invariants, and obtain $\sigma_{RM}'\sigma_{RM}^{-1}$ in terms of them. 
We denote the components of $P(s)$ as $P(s)=(X(s), Y(s), Z(s))$ and that
of the $n$-th derivative with respect to $s$ as
$P^{(n)}=(X^{(n)}, Y^{(n)}, Z^{(n)})$.
By construction,
\[ \rho_{RM}\cdot P=0\]  
and by definition of the action,
\[\rho_{RM}\cdot P^{(n)}=\sigma_{RM}P^{(n)},\quad n>0.\]
We now recall the standard symbolic names of these normalized invariants (see Definition \ref{normInvsnames}), as
 \begin{equation}\label{DefIXn} \sigma_{RM} P^{(n)}=( \iup(X^{(n)}), \iup(Y^{(n)}), \iup(Z^{(n)}) )^T.\end{equation}
 Since 
 \[
 ((\iup(X'), \iup(Y'), \iup(Z')){^{T}}= \sigma_{RM} P' = (P'\cdot P', V\cdot P',  (P'\times V)\cdot P'){^{T}}= (1, 0, 0)^T,
 \]
  we  make the following definition.
  \begin{defn}[arc-length constraint]\label{arclengCon}
The equation $ \iup(X')=1$ is denoted as the \emph{arc-length constraint}.
\end{defn}

 Differentiating (\ref{DefIXn}) with respect to $s$, yields
\begin{equation}\label{RecDiff}
 \frac{{\rm d}}{{\rm d} s} \begin{pmatrix} \iup(X^{(n)})\\ \iup(Y^{(n)})\\ \iup(Z^{(n)}) \end{pmatrix} 
 = \frac{{\rm d}}{{\rm d} s}({\sigma_{RM}})\sigma_{RM}^{-1} \begin{pmatrix} \iup(X^{(n)})\\ \iup(Y^{(n)})\\ \iup(Z^{(n)}) \end{pmatrix} 
  + \begin{pmatrix} \iup(X^{(n+1)})\\ \iup(Y^{(n+1)})\\ \iup(Z^{(n+1)}) \end{pmatrix}.
 \end{equation} 
Setting $n=1$ 
and recalling $ \sigma_{RM} P' = (1, 0, 0)^T$, we have  from (\ref{CurvMat}) and (\ref{RecDiff}) that 
 \[ \left(\begin{array}{c} 0\\ 0\\0\end{array}\right)
 = \left(\begin{array}{c} \iup(X'')\\ \iup(Y'')\\\iup(Z'')\end{array}\right) + \left(\begin{array}{c} 0\\ -P''\cdot V\\-P''\cdot (P'\times V) \end{array}\right).\]
 Therefore
we can write down $\frac{{\rm d}}{{\rm d} s}({\sigma_{RM}})\sigma_{RM}^{-1}$ in terms of the normalized invariants, specifically,
\begin{equation}\label{QsigRMNormInvP}
\frac{{\rm d}}{{\rm d} s}({\sigma_{RM}})\sigma_{RM}^{-1}=\begin{pmatrix} 0& \iup(Y'') & \iup(Z'')\\ -\iup(Y'')  &0&0\\-\iup(Z'')  &0&0\end{pmatrix}.
\end{equation}
Inserting this into Equation \eqref{RecDiff} yields the all important recurrence formulae for the symbolic invariant differentiation of the normalized
invariants of the $P^{(n)}$.

We next consider the normalized invariants of the $V^{(n)}$, which are
 \begin{equation}\label{SSSDI}
 \sigma_{RM} V^{(n)}=(\iup(V_1^{(n)}), \iup(V_2^{(n)}), \iup(V_3^{(n)}))^T, \quad n\ge 0.
 \end{equation}
Differentiating both sides of \eqref{SSSDI} with respect to $s$ yields the recurrence formula for the invariant differentiation
of the symbolic normalized invariants of the components of $V^{(n)}$, 
 \begin{equation}\label{QsigRMNormInvV}
 \frac{{\rm d}}{{\rm d} s}  \begin{pmatrix}\iup(V_1^{(n)})\\ \iup(V_2^{(n)})\\ \iup(V_3^{(n)})\end{pmatrix} =
 \frac{{\rm d}}{{\rm d} s}({\sigma_{RM}})\sigma_{RM}^{-1}  \begin{pmatrix}\iup(V_1^{(n)})\\ \iup(V_2^{(n)})\\ \iup(V_3^{(n)})\end{pmatrix} 
 + \begin{pmatrix}\iup(V_1^{(n+1)})\\ \iup(V_2^{(n+1)})\\ \iup(V_3^{(n+1)})\end{pmatrix}. \end{equation}
 Setting $n=0$ into this, and since $\sigma_{RM} V = (0, 1, 0)^T$ we have that
 \begin{equation}\label{VSymbInvs}
 \left(\begin{array}{c} 0\\ 0\\0\end{array}\right)
 = \left(\begin{array}{c} \iup(V_1')\\  \iup(V_2')\\ \iup(V_3')\end{array}\right) + \left(\begin{array}{c} \iup(Y'')\\ 0\\0 \end{array}\right).\end{equation}
 
 Finally, we note that if we take a right orthonormal frame $\sigma_{RM}=\begin{pmatrix} P' & V & P'\times V\end{pmatrix}^T$, 
 where we have momentarily relaxed the differential equation condition on $V$, calculate $\sigma_{RM}'\sigma_{RM}^{-1}$ and write the components in terms
 of the normalized invariants using the Replacement Rule, Theorem \ref{reprule}, we obtain
 \begin{equation}\label{ResQsV3NotZero} \sigma_{RM}'\sigma_{RM}^{-1} = \begin{pmatrix} 0& \iup(Y'') & \iup(Z'')\\ -\iup(Y'')&0&\iup(V_3')\\ -\iup(Z'') & -\iup(V_3') & 0\end{pmatrix}.\end{equation}
 We thus see that $(2,3)$-component of $\sigma_{RM}'\sigma_{RM}^{-1}$ being zero, 
 which is what makes $\sigma_{RM}$ a \RM frame, yields a  constraint on the symbolic invariant $\iup(V_3') $.  
 The invariantization of the differential equation for $V$ yields
 \[\begin{pmatrix}\iup(V_1') \\ \iup(V_2')\\ \iup(V_3')\end{pmatrix} = -\iup(Y'')\begin{pmatrix}1 \\ 0\\ 0\end{pmatrix}.\]
 Using calculations similar to those above, it can be seen that the first two components of this equation relate to the orthonormality of $V$
 with respect to $P'$. We thus make the following definition.
 \begin{defn}[\RM  frame constraint]\label{NFproxy}
 The equation $\iup(V_3')=0$ is denoted as the \emph{\RM frame constraint}.
 \end{defn}
 
 When deriving the differential syzygy needed in the sequel, we will write the (reduced) curvature matrix with respect to 
 $s$ for the \RM frame as
 \begin{equation}\label{QsNnoCon}
\frac{{\rm d}}{{\rm d} s}({\sigma_{RM}})\sigma_{RM}^{-1}  =\left(\begin{array}{ccc} 0 & \iup(Y'') & \iup(Z'')\\ -\iup(Y'') & 0 &\iup(V_3')\\ -\iup(Z'') & -\iup(V_3')&0\end{array}\right),\qquad \iup(V_3')=0.
 \end{equation}
This is because we need to calculate the evolution of  $\iup(V_3')$  with respect to time, for our application.

\subsection{The time evolution of the frame}

We now suppose that our curve $s\mapsto P(s)$ evolves in time. The time derivatives of our variables are denoted as
\[ \frac{{\rm d}}{{\rm d}t} P^{(n)} = P_t^{(n)}, \qquad \frac{{\rm d}}{{\rm d}t} V^{(n)} = V_t^{(n)}\]
 and the action is, for 
$g=(R,\mathbf{a})\in SO(3)\ltimes \mathbb{R}$, and all $n\ge 0$,
\[ \begin{array}{rcl} P_t^{(n)}\mapsto g\cdot  P_t^{(n)} &=&R P_t^{(n)} \\
V_t^{(n)}\mapsto g\cdot  V_t^{(n)}&=&R V_t^{(n)}.
\end{array}\]

The normalized differential invariants are the components of
\[ \iup(P^{(n)}_t) = \sigma_{RM} P^{(n)}_t,\qquad \iup(V^{(n)}_t) = \sigma_{RM} V^{(n)}_t,\quad n=0,1,2,\dots \]
The curvature matrix for the extended \RM frame, with respect to time, is
\begin{equation}\label{ExCurvMxTime}
\begin{array}{rcl}
\frac{{\rm d}}{{\rm d} t}\rho_{RM} \rho_{RM}^{-1}&=&\begin{pmatrix} \frac{{\rm d}}{{\rm d} t}\sigma_{RM}\, \sigma_{RM}^{-1} & -\sigma_{RM} P_t\\
0&0\end{pmatrix}\\[15pt] &=&
\begin{pmatrix}\frac{{\rm d}}{{\rm d} t}\sigma_{RM}\, \sigma_{RM}^{-1} &
\begin{matrix} -\iup(X_t)\\ -\iup(Y_t)\\-\iup(Z_t)\end{matrix}\\0&0\end{pmatrix}.
\end{array}\end{equation}

Calculating the invariant matrix $\frac{{\rm d}}{{\rm d} t}({\sigma_{RM}})\sigma_{RM}^{-1} \in \mathfrak{so}(3)$ yields
\[\begin{array}{rcl} 
\frac{{\rm d}}{{\rm d} t}({\sigma_{RM}})\sigma_{RM}^{-1} &=&\left( \begin{array}{ccc} 0& P'_t\cdot V & P'_t\cdot (P'\times V)\\
-P'_t V & 0 & V_t\cdot (P'\times V)\\
-P'_t\cdot (P'\times V) & -V_t\cdot (P'\times V)&0\end{array}\right)\\[25pt]
&=&\left( \begin{array}{ccc} 0& \iup(Y'_t) & \iup(Z'_t)\\
-\iup(Y'_t) & 0 & \iup(V'_{3,t})\\
-\iup(Z'_t)& - \iup(V'_{3,t})&0\end{array}\right)\end{array}
\]
where we have used the Replacement Rule, Theorem \ref{reprule}, recalling $\sigma_{RM}P'=\begin{pmatrix} 1&0&0\end{pmatrix}^T$ and
$\sigma_{RM}V=\begin{pmatrix} 0&1&0\end{pmatrix}^T$.

Differentiating both sides of $\sigma_{RM} P' = (1,0,0)^T$ with respect to $t$ yields
\[\sigma_{RM} P'_t + \left(\frac{{\rm d}}{{\rm d} t}({\sigma_{RM}})\sigma_{RM}^{-1} \right) \left(\sigma_{RM} P'\right) =\begin{pmatrix}0\\0\\0\end{pmatrix}\]
so that indeed,
\[ \begin{pmatrix}\iup(X'_t)\\ \iup(Y'_t)\\ \iup(Z'_t)\end{pmatrix}= \begin{pmatrix}0\\ P'_t\cdot V\\ P'_t\cdot (P'\times V)\end{pmatrix}.\]

Further, differentiating both sides of $\sigma_{RM} V = (0,1,0)^T$ with respect to $t$ yields
\[ \sigma_{RM} V_t +\left(\frac{{\rm d}}{{\rm d} t}({\sigma_{RM}})\sigma_{RM}^{-1} \right) \left(\sigma_{RM} V\right) =\begin{pmatrix}0\\0\\0\end{pmatrix}\] so that
\[ \begin{pmatrix}\iup(V_{1,t})\\ \iup(V_{2,t})\\ \iup(V_{3,t})\end{pmatrix}= \begin{pmatrix}-P'_t\cdot V \\0\\ V_t\cdot (P'\times V)\end{pmatrix}.\]

\subsection{The syzygy operator $\mathcal{H}$}

Recall the extended \RM frame, $\rho_{RM}$, and the curvature matrices, $Q^s=\rho_{RM}'\rho_{RM}^{-1}$,
$Q^t=\frac{{\rm d}}{{\rm d} t}\rho_{RM}\rho_{RM}^{-1}$, Equations (\ref{rhoRMdef}), (\ref{ExCurvMx}), (\ref{ExCurvMxTime}) and 
repeated here for convenience,
\begin{equation}\label{rhoRMdefN} \rho_{RM}=\begin{pmatrix} \sigma_{RM} & -\sigma_{RM} P \\ 0 & 1\end{pmatrix},\end{equation}
\begin{equation}\label{ExCurvMxN} Q^s=\rho_{RM}'\rho_{RM}^{-1} = \begin{pmatrix} \sigma_{RM}'\sigma_{RM}^{-1} & \begin{matrix} -\iup(X')\\ \phantom{-}0\\ \phantom{-} 0\end{matrix}\\ 0 & \phantom{-}0\end{pmatrix}\end{equation}
where we have not yet imposed the arc length constraint $\iup(X')=1$ since we need its time evolution, and
\begin{equation}\label{ExCurvMxTimeN}
Q^t=\frac{{\rm d}}{{\rm d} t}\rho_{RM} \rho_{RM}^{-1}=
\begin{pmatrix}\frac{{\rm d}}{{\rm d} t}\sigma_{RM}\, \sigma_{RM}^{-1} &
\begin{matrix} -\iup(X_t)\\ -\iup(Y_t)\\-\iup(Z_t)\end{matrix}\\0&0\end{pmatrix}.
\end{equation}

The non-constant components of $Q^s$ are the generating invariants of the algebra of invariants of the form
$F=F(P,P',P'',\dots, V,V',V'',\dots)$; every invariant of this form can be written as a function of $\iup(Y'')$, $\iup(Z'')$ and their
derivatives with respect to $s$.  

The syzygy operator $\mathcal{H}$ that we need for our calculations in the Calculus of Variations, 
relates the time derivatives
of these generating invariants to the $s$ derivatives of the components of $\iup(P_t)$ and $\iup(V_t)$, occurring in $Q^t$.
In our case here, the syzygy operator $\mathcal{H}$ can be calculated by examining the components of the compatibility condition
 of the curvature matrices $ Q^s$ and $Q^t$, 
\begin{equation}\label{CCrhoRM} \frac{{\rm d}}{{\rm d} t} Q^s - \frac{{\rm d}}{{\rm d} s}Q^t = \left[ Q^t,  Q^s\right] \end{equation}
which follows from the fact the derivatives with respect to $t$ and $s$ commute (see \cite{Mansfield:2010aa}, \S 5.2).
We use $\sigma_{RM}'\sigma_{RM}^{-1}  $ in the form of Equation (\ref{ResQsV3NotZero}), that is, with the \RM constraint
not yet imposed, as we will need its variation with respect to time in the sequel.

Calculating the components of  Equation (\ref{CCrhoRM}) yields,
\begin{equation}\label{DiffSyz}
\begin{array}{rcl}
\frac{{\rm d}}{{\rm d} t} \iup(X') &=&\frac{{\rm d}}{{\rm d} s} \iup(X_t)-\iup(Y'')\iup(Y_t)-\iup(Z'')\iup(Z_t),\\
\frac{{\rm d}}{{\rm d} t} \iup(Y'') &=& \frac{{\rm d}^2}{{\rm d} s^2} \iup(Y_t)+\frac{{\rm d}}{{\rm d} s}\left(\iup(Y'')\iup(X_t)\right)+\iup(V_{3,t})\iup(Z''),\\
\frac{{\rm d}}{{\rm d} t} \iup(Z'') &=& \frac{{\rm d}^2}{{\rm d} s^2} \iup(Z_t)+\frac{{\rm d}}{{\rm d} s}\left(\iup(Z'')\iup(X_t)\right)-\iup(V_{3,t})\iup(Y''),\\
\frac{{\rm d}}{{\rm d} t} \iup(V_3')&=&\frac{{\rm d}}{{\rm d} s}\iup(V_{3,t}) +\iup(Y'')\frac{{\rm d}}{{\rm d} s}\iup(Z_t)-\iup(Z'')\frac{{\rm d}}{{\rm d} s}\iup(Y_t)
\end{array}\end{equation}
or in the form we require,
\[ \frac{{\rm d}}{{\rm d} t}\left(\begin{array}{c} \iup(X') \\ \iup(Y'') \\  \iup(Z'') \\
  \iup(V_3') \end{array}\right) = \mathcal{H}
\left(\begin{array}{c}\iup(X_t) \\ \iup(Y_t) \\\iup(Z_t)\\ \iup(V_{3,t})\end{array}\right)\]
where
\begin{equation}\label{Hdef}
\mathcal{H}=\left(\begin{array}{cccc}
\frac{{\rm d}}{{\rm d} s} & -\iup(Y'') & \iup(Z'') & 0 \\
 \iup(Y'')\frac{{\rm d}}{{\rm d} s}+\frac{{\rm d}}{{\rm d} s}\iup(Y'') & \frac{{\rm d}^2}{{\rm d} s^2} & 0  & \iup(Z'')\\
 \iup(Z'')\frac{{\rm d}}{{\rm d} s}+\frac{{\rm d}}{{\rm d} s}\iup(Z'') & 0 & \frac{{\rm d}^2}{{\rm d} s^2} & -\iup(Y'')\\
0 & -\iup(Z'')\frac{{\rm d}}{{\rm d} s} & \iup(Y'')\frac{{\rm d}}{{\rm d} s} &\frac{{\rm d}}{{\rm d} s}
\end{array}\right).
\end{equation}
We note that  $\mathcal{H}$ is an invariant, linear, matrix differential operator.

\section{Invariant calculus of variations}\label{ELsection}

We consider an $SE(3)$ invariant Lagrangian of the form
\[ \mathcal{L}[X',Y',Z',X'',Y'',Z'',...] = \int L(\kappa_1, \kappa_2,\kappa_{1,s},\kappa_{2,s},...) + \mu \zeta + \lambda(\eta -1)\, {\rm d}s\]
where we have set $\zeta=\iup(V_3')$, $\eta = \iup(X')$, $\kappa_1 = \iup(Y'')$ and $\kappa_2=\iup(Z'')$, and  
where $\mu$ and $\lambda$ are Lagrange multipliers for the \RM  frame constraint (Definition \ref{NFproxy}) and the arc-length constraint (Definition \ref{arclengCon}) respectively.  

Recall the Euler operator with respect to a dependent variable $u$ is defined by
\[ \mathrm{E}^u(L) = \sum_n (-1)^n \frac{{\rm d}^n}{{\rm d}s^n} \frac{\partial L}{\partial u^{(n)}} \]
where $u^{(n)} = \frac{{\rm d}^n}{{\rm d}s^n} u$.
We apply the invariantized version of the calculation of the Euler--Lagrange equations  (see  \cite{Mansfield:2010aa}, \S 7.3, also \cite{GonMan2}), 
to obtain
\[ 0=\begin{pmatrix} \mathrm{E}^X \\ \mathrm{E}^Y \\ \mathrm{E}^Z \\ \mathrm{E}^{V_3} \end{pmatrix}=\mathcal{H}^* \begin{pmatrix} \mathrm{E}^{\eta}\\ \mathrm{E}^{\kappa_1} \\ \mathrm{E}^{\kappa_2} \\ \mathrm{E}^{\zeta}\end{pmatrix}\]
that is, 
\begin{align}\label{eulerlagrangeequations}
0=\mathrm{E}^X\,&=-\kappa_1\frac{{\rm d}}{{\rm d} s} \eukao -\kappa_2\frac{{\rm d}}{{\rm d} s} \eukat - \lambda_s, \\
0=\mathrm{E}^{Y}\,&=\frac{{\rm d}^2}{{\rm d} s^2} \eukao +\frac{{\rm d}}{{\rm d} s}\left( \kappa_2 \mu \right)- \kappa_1 \lambda ,\\
0=\mathrm{E}^{Z}\,&=\frac{{\rm d}^2}{{\rm d} s^2} \eukat -\frac{{\rm d}}{{\rm d} s} \left( \kappa_1 \mu \right) - \kappa_2 \lambda ,\\
0=\mathrm{E}^{V_3}&=\eukao \kappa_2- \eukat \kappa_1 - \mu_s.
\end{align}

\begin{rem}
Note that
\[
-\kappa_1\frac{{\rm d}}{{\rm d} s}\eukao -\kappa_2\frac{{\rm d}}{{\rm d} s}\eukat = -\frac{{\rm d}}{{\rm d} s}\left( \kappa_1 \eukao + \kappa_2 \eukat\right) +\kappa_{1,s} \eukao+\kappa_{2,s} \eukat.
\]
Also, by arguments similar to that of equation $(7.17)$ in \cite{Mansfield:2010aa} we have
\begin{align*}
& \kappa_{1,s} \eukao +\kappa_{2,s} \eukao \\
&\qquad= \frac{{\rm d}}{{\rm d} s} \left(L - \sum_{m=1} \sum^{m-1}_{k=0} {\left(-1\right)}^k \left(\left(\frac{{\rm d}^k}{{\rm d} s^k} \frac{\partial L}{\partial \kappa_{1,m}}\right)\kappa_{1,m-k}
+ \left(\frac{{\rm d}^k}{{\rm d} s^k} \frac{\partial L}{\partial \kappa_{2,m}}\right)\kappa_{2,m-k}\right)\right).
\end{align*}

Therefore, $\lambda_s$ is a total derivative and we obtain
\begin{equation}\label{lambda}
\lambda =  - \kappa_1 \eukao - \kappa_2 \eukat +  L - \sum_{m=1} \sum^{m-1}_{k=0} {\left(-1\right)}^k \left(\left(\frac{{\rm d}^k}{{\rm d} s^k} \frac{\partial L}{\partial \kappa_{1,m}}\right)\kappa_{1,m-k}
- \left(\frac{{\rm d}^k}{{\rm d} s^k} \frac{\partial L}{\partial \kappa_{2,m}}\right)\kappa_{2,m-k}\right)
\end{equation}
where the constant of integration has been absorbed into $\lambda$ by Remark $7.1.9$ of \cite{Mansfield:2010aa}. This result for $\lambda$ relates
to the invariance of the Lagrangian under translation in $s$, that is, we have invariance under $s\mapsto s+\epsilon$ and hence a 
corresponding Noether law.
\end{rem}

To apply the results of \cite{GonMan2} to obtain the Noether conservation laws, we need to calculate the infinitesimals of our group action,
its associated \textit{matrix of infinitesimals}, and the right Adjoint action of the Lie group $SE(3)$ on the infinitesimal vector fields. Here we give the basic definitions, for completeness. For the Lie group $SE(3)$ and the left linear action, the precise calculations appear in \cite{GonMan2} with the  end results needed for our case here recorded in the proof of the following Theorem.
Elements in the Lie group $SE(3)$ are, in a neighbourhood of the identity element, described by six parameters,  three translation parameters,
$a$, $b$ and $c$, and
 three rotation parameters, $\theta_{xy}$, $\theta_{yz}$ and $\theta_{xz}$ where $\theta_{xy}$ is the (anticlockwise) rotation in the $(x,y)$-plane,
 and similarly for $\theta_{yz}$ and $\theta_{xz}$. For a point with coordinates $(X,Y,Z, X',Y', Z',V_1,V_2,V_3,\dots)=(z_1, z_2, \dots)$, we define the infinitesimal vector field with respect to the group parameter $a_i$ to be
 \[ \mathbf{v}_{a_i} = \sum_j \frac{\partial g\cdot z_j}{\partial a_i}\Big\vert_{g=e}\, \partial_{z_j}\]
 where $e$ is the identity element of the group. The \textit{matrix of infinitesimals} is then the matrix
 \[ \Phi=(\phi_{ij}),\qquad  \phi_{ij} = \frac{\partial g\cdot z_j}{\partial a_i}\Big\vert_{g=e}\]
 and the invariantized matrix of infinitesimals is
 \[ \Phi(I)=(\iup(\phi_{ij})).\] 
 Finally, given an infinitesimal vector field $\mathbf{v}=\sum_j \xi^j(z)\partial_{z_j}$, the right Adjoint action of $G$ on $\mathbf{v}$ is given by 
 \[ \sum_j \xi^j(z)\partial_{z_j}\mapsto \sum_j \xi^j(g\cdot z)\partial_{g\cdot z_j}.\]
 This determines a linear map, $\mathbf{v}_{a_i}\mapsto \sum_{k}(\mathcal{A}d(g) )_{ik}\mathbf{v}_{a_k}$ called the right Adjoint action of 
 $G$ on its Lie algebra of vector fields. (See \cite{Mansfield:2010aa} for further details.)
 
Continuing to apply the results of  \cite{GonMan2}, we obtain that Noether's laws are as given in the following theorem,
\begin{theorem} The conservation laws are of the form
\begin{equation}\label{NCL}
\left(\begin{array}{cccccc} \sigma_{RM}^T & 0 \\ D {\bf X} \sigma_{RM}^T & D \sigma_{RM}^T D \end{array}\right)\left( \begin{array}{c} \lambda \\ -\frac{{\rm d}}{{\rm d} s} \eukao - \mu \kappa_2 \\ -\frac{{\rm d}}{{\rm d} s} \eukat + \mu \kappa_1 \\ \mu \\ \eukat \\ \eukao \end{array}\right) = \left( \begin{array}{c} c_1 \\ c_2 \\ c_3 \\ c_4 \\ c_5 \\ c_6 \end{array}\right)
\end{equation}
where
\[
{\bf X}=\left(\begin{array}{ccc} 0 & -Z & Y\\Z&0&-X\\ -Y& X &0\end{array}\right)
 \]
 $D=\textrm{diag}(1,-1,1)$, and the $c_i$ are constants.
\end{theorem}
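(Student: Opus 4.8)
The plan is to specialize the general moving-frame derivation of Noether's conservation laws of \cite{GonMan2} (see also \cite{Mansfield:2010aa}, \S 7.4) to the extended right \RM frame $\rho_{RM}$ and the linear $SE(3)$ action used here. The mechanism is standard: since $L$ is $SE(3)$-invariant, its first variation along any infinitesimal group motion vanishes identically; writing that variation in invariantized form splits it into a bulk term, built from the invariantized Euler operators via the syzygy operator $\mathcal{H}$ contracted against the invariantized matrix of infinitesimals $\Phi(I)$, plus a total $s$-derivative of boundary terms. On extremals the bulk term is zero, so the boundary terms are constant in $s$; pulled back along the frame they reorganise as $\mathcal{A}d(\rho_{RM})\,\mathbf{v}_{\mathrm{inv}}=\mathbf{c}$, with $\mathbf{v}_{\mathrm{inv}}$ an invariant current and $\mathbf{c}$ constant. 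So the proof reduces to three tasks: (i) identify $\mathbf{v}_{\mathrm{inv}}$ explicitly; (ii) identify the Adjoint matrix of $\rho_{RM}$ in the chosen basis of generators; (iii) assemble them and confirm $s$-independence.

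For (i), I would use the adjoint $\mathcal{H}^{*}$ of the operator in \eqref{Hdef}. Since $\mathcal{H}^{*}\big(\mathrm{E}^{\eta},\mathrm{E}^{\kappa_1},\mathrm{E}^{\kappa_2},\mathrm{E}^{\zeta}\big)^{T}$ equals the ordinary Euler operator vector $\big(\mathrm{E}^{X},\mathrm{E}^{Y},\mathrm{E}^{Z},\mathrm{E}^{V_3}\big)^{T}$ --- the identity displayed just above \eqref{eulerlagrangeequations} --- integration by parts in each entry separates the Euler--Lagrange expressions of \eqref{eulerlagrangeequations} (zero on extremals) from total $s$-derivatives of boundary terms. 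Collecting these, together with the multiplier $\mu$ and the closed form \eqref{lambda} for $\lambda$, yields precisely the six-component current
\[
\mathbf{v}_{\mathrm{inv}}=\Big(\lambda,\ -\tfrac{{\rm d}}{{\rm d}s}\mathrm{E}^{\kappa_1}-\mu\kappa_2,\ -\tfrac{{\rm d}}{{\rm d}s}\mathrm{E}^{\kappa_2}+\mu\kappa_1,\ \mu,\ \mathrm{E}^{\kappa_2},\ \mathrm{E}^{\kappa_1}\Big)^{T}
\]
appearing in \eqref{NCL}; the last row and column of $\mathcal{H}$, which originate from the \RM frame constraint $\zeta=\iup(V_3')$, are the source of the $\mu$ slot and the cross terms $\pm\mu\kappa_i$.

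For (ii), I would compute the right Adjoint representation of $SE(3)=SO(3)\ltimes\mathbb{R}^3$ on its Lie algebra in the ordered basis of the three translations $\mathbf{v}_a,\mathbf{v}_b,\mathbf{v}_c$ and the three rotations $\mathbf{v}_{\theta_{xy}},\mathbf{v}_{\theta_{yz}},\mathbf{v}_{\theta_{xz}}$, and evaluate it at $g=\rho_{RM}$. For a semidirect product the Adjoint is block lower triangular: the translation--translation block is the rotation part $\sigma_{RM}^{T}$ of the frame, the rotation--rotation block is the conjugated rotation $D\sigma_{RM}^{T}D$, and the off-diagonal block couples $P=(X,Y,Z)$ to the rotations through the skew matrix $\mathbf{X}$, producing $D\mathbf{X}\sigma_{RM}^{T}$. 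The matrix $D=\mathrm{diag}(1,-1,1)$ arises only because the chosen ordered rotation basis $(\theta_{xy},\theta_{yz},\theta_{xz})$ differs from the standard $\mathfrak{so}(3)$ basis by that diagonal sign change. This reproduces the block matrix in \eqref{NCL}. For (iii), differentiating $\mathcal{A}d(\rho_{RM})\,\mathbf{v}_{\mathrm{inv}}$ with respect to $s$ and using $\rho_{RM}'=Q^{s}\rho_{RM}$ together with the recurrence formulae of \S\ref{MFsection}, every surviving term is either an Euler--Lagrange expression or cancels against $Q^{s}$; hence the product is constant, giving the $c_i$.

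The step I expect to be the main obstacle is not any single computation but the convention bookkeeping: left versus right frame, the ordering and orientation of the rotation generators (the origin of $D$), and verifying that the boundary-term vector extracted via $\mathcal{H}^{*}$ matches, slot for slot, the rows of the Adjoint matrix --- in particular that the $\iup(V_3')$ constraint contributes the $\mu$ entry and the terms $\pm\mu\kappa_i$ in exactly the positions they occupy in \eqref{NCL}. Once this dictionary between the abstract theorem of \cite{GonMan2} and the present notation is fixed, the remainder is routine linear algebra and integration by parts.
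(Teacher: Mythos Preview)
Your proposal is correct and follows essentially the same route as the paper: specialize the general Noether machinery of \cite{GonMan2} to the extended \RM frame by computing the boundary terms from $\mathcal{H}$, the invariantized matrix of infinitesimals $\Phi(I)$, and the Adjoint block matrix for $SE(3)$, then assemble. One bookkeeping point to fix (which you yourself flag as the likely pitfall): the block matrix in \eqref{NCL} is $\mathcal{A}d(\rho_{RM})^{-1}=\mathcal{A}d(\rho_{RM}^{-1})$, not $\mathcal{A}d(\rho_{RM})$ itself, since the rotation part of $\rho_{RM}$ is $\sigma_{RM}$ while the translation--translation block in \eqref{NCL} is $\sigma_{RM}^{T}$; and your verification step (iii) is exactly what the paper records as a post-proof check rather than part of the argument proper.
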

\begin{proof} 
In order to compute the conservation laws, we need the boundary terms $\mathcal{A}_{\mathcal{H}}$, the (right) Adjoint representation
of the frame $\rho_{RM}$ and the invariantized matrix of infinitesimals, which we defined above. We now consider these in turn.

Let $\mathrm{E}(L)=\begin{pmatrix} \mathrm{E}^{\eta} & \mathrm{E}^{\kappa_1} & \mathrm{E}^{\kappa_2} & \mathrm{E}^{\zeta} \end{pmatrix}$ and let
$\phi^t=\begin{pmatrix} \iup(X_t) & \iup(Y_t) & \iup(Z_t) & \iup(V_{3,t})\end{pmatrix}^T$. Then
the  boundary terms $\mathcal{A}_{\mathcal{H}} $ are defined by 
\[ \frac{{\rm d}}{{\rm d} s} \mathcal{A}_{\mathcal{H}} = \mathrm{E}(L) \mathcal{H}\phi^t - \mathcal{H}^* \mathrm{E}(L) \phi^t.\]
By direct calculation, we obtain
\begin{align*}
\mathcal{A}_{\mathcal{H}}&= \lambda \iup(X_t) +  \left( -\frac{{\rm d}}{{\rm d} s} \eukao - \mu \kappa_2\right) \iup(Y_t) + \left( -\frac{{\rm d}}{{\rm d} s} \eukat + \mu \kappa_1\right) \iup(Z_t) \\ & \qquad  + \eukao \iup(Y'_t) + \eukat \iup(Z'_t) + \mu \iup(V_{3,t})\\
&= \mathcal{C}^X \iup(X_t) + \mathcal{C}^Y \iup(Y_t) + \mathcal{C}^Z  \iup(Z_t) + \mathcal{C}^{Y'}\iup(Y'_t)+ \mathcal{C}^{Z'}\iup(Z'_t)
+\mathcal{C}^{V_{3,t}} \iup(V_{3,t})
\end{align*}
where this defines the coefficients $\mathcal{C}$ and where we have used the syzygies
\begin{align*}
\iup(Y'_t) & = \frac{{\rm d}}{{\rm d} s} \iup(Y_t) + \kappa_1 \iup(X_t),\\
\iup(Z'_t) & = \frac{{\rm d}}{{\rm d} s} \iup(Z_t) + \kappa_2 \iup(X_t)
\end{align*} 
 to eliminate derivatives of $\iup(Y_t)$ and $\iup(Z_t)$ in the boundary terms.

In \cite{GonMan2}, the authors show the (right) Adjoint representation of ${\rm S}E(3)$  with respect to the generating infinitesimal vector fields of the action,
\begin{equation}\label{VOI}
\begin{array}{lll}
{\bf v}_{a} = \partial_X, & {\bf v}_{b} = \partial_Y, & {\bf v}_{c}  =  \partial_Z, \\ {\bf v}_{YZ} = Y \partial_Z -  Z \partial_Y, & {\bf v}_{XZ}  =  X \partial_Z - Z \partial_X, & {\bf v}_{XY} = X \partial_Y -Y \partial_X\end{array}
\end{equation}
is of the form, for $g=(R,\mathbf{a})$,
\[
\mathcal{A}d(g)=\left(\begin{array}{cc} R & 0 \\ DAR & DRD \end{array}\right)
\]
where $R \in {\rm S}O(3)$,  $D$ is the diagonal matrix $D=\mbox{diag}(1,-1,1)$ and $A$ is the matrix
\[
 A=\left(\begin{array}{ccc} 0 & -c & b\\c&0&-a\\ -b& a &0\end{array}\right)
\]
where $\mathbf{a}={(a,b,c)}^{T}$ is the translation vector component of $g$.

Hence
\[
{\mathcal{A}d(\rho_{RM})}^{-1}=\left(\begin{array}{cccccc} \sigma_{RM}^{T} & 0 \\ D {\bf X} \sigma_{RM}^{T} & D \sigma_{RM}^{T} D \end{array}\right)
\]
where
\[
{\bf X}=\left(\begin{array}{ccc} 0 & -Z & Y\\Z&0&-X\\ -Y& X &0\end{array}\right).
 \]
 
The invariantized matrix of infinitesimals with respect to the basis \eqref{VOI} is  
 \[ \Phi(I) =\bordermatrix{ & X & Y & Z & Y' & Z' & V_3
\cr a & 1 & 0 & 0 & 0 & 0 & 0
\cr b &0&1&0& 0 & 0 & 0
\cr c & 0&0&1& 0 & 0 & 0
\cr\theta_{yz} & 0 & 0 & 0& 0 & 0 & 1
\cr \theta_{xz} &0&0&0& 0 & 1 & 0
\cr\theta_{xy} & 0&0&0& 1 & 0 & 0
\cr}.\]

Finally, the conservation laws obtained via Noether's theorem for the unidimensional case are, see \cite{GonMan2},
\begin{equation}\label{conservatiolaws}
{\mathcal{A}d(\rho)}^{-1} v(I)={\bf c}
\end{equation}
where
\begin{equation}\label{infinitesimals}
v(I)=\sum_{\alpha} \Phi^{\alpha}(I) \mathcal{C}^{\alpha}={\left( \begin{array}{cccccc} \lambda & -\frac{{\rm d}}{{\rm d} s} \eukao - \mu \kappa_2 & -\frac{{\rm d}}{{\rm d} s} \eukat + \mu \kappa_1 & \mu & \eukat & \eukao \end{array}\right)}^T
\end{equation}
as required.
\end{proof}

\begin{rem} A quick check on this result is obtained by noting the following.
Differentiating  \eqref{NCL} with respect to $s$ and multiplying by $\mathcal{A}d(\rho_{RM})$, we get
\[
\frac{{\rm d}}{{\rm d} s}v(I)=\frac{{\rm d}}{{\rm d} s}\left(\mathcal{A}d(\rho_{RM}) \right){\mathcal{A}d(\rho)}^{-1} v(I)
\] 
i.e,
\begin{equation}\label{diffv}
\frac{{\rm d}}{{\rm d} s} v(I) = \left( \begin{array}{cccccc} 0 & \kappa_1 & \kappa_2 & 0 & 0 & 0 \\ -\kappa_1 & 0 & 0 & 0 & 0 & 0 \\ -\kappa_2 & 0 & 0 & 0 &0 & 0\\ 0 & 0 & 0 & 0 & -\kappa_1 & \kappa_2 \\ 0 & 0 & -1 & \kappa_1 & 0 & 0 \\ 0 & -1 & 0 & -\kappa_2 & 0 & 0 \end{array}\right) v(I).
\end{equation}
We observe that the first four rows are equivalent to the Euler-Lagrange equations while last two rows are identically 0, as expected.
\end{rem}


\section{Solution of the integration problem}\label{IPsection}

The conservation laws \eqref{NCL} can reduce the integration problem. We write these in the form,

\begin{equation}\label{NCLinv}
\left(\begin{array}{cccccc} \sigma^T_{RM} & 0 \\ D {\bf X} \sigma^T_{RM} & D \sigma^T_{RM} D \end{array}\right)\left(\begin{array}{c} \textbf{w}_1(I)\\ \textbf{w}_2(I)\end{array}\right)=\left(\begin{array}{c} \textbf{c}_1\\ \textbf{c}_2\end{array}\right)
\end{equation}
where $v(I)=( \textbf{w}_1(I), \textbf{w}_2(I))^T$, $\textbf{c}=(\textbf{c}_1,\textbf{c}_2)^T$ and recalling
\[
\mathbf{X}=\left(\begin{array}{ccc} 0 & -Z & Y\\Z&0&-X\\ -Y& X &0\end{array}\right).
 \]

Since $\sigma_{RM}\in {\rm S}O(3)$ we have from
\begin{equation}\label{CLeqnCompOne} \sigma_{RM} \textbf{c}_1=\textbf{w}_1(I)\end{equation} 
that
\begin{equation}\label{modcweqn}
|\textbf{c}_1|=|\textbf{w}_1(I)|.
\end{equation}
Further,  multiplying the second component of Equation (\ref{NCLinv}) on the left by $  \textbf{c}_1(I)^T D$, since $D^2=I$, we obtain
\begin{equation}\label{SecondIntELeqn}
\textbf{w}_1^TD\textbf{w}_2 = \textbf{c}_1^TD\textbf{c}_2.
\end{equation}

In order to solve Equation (\ref{CLeqnCompOne}), as far as we can, for the components of $\sigma_{RM}$ in terms of the components of
$\textbf{c}_1$ and $\textbf{w}_1(I)$, we use the Cayley representation $\Cay$ of elements of $SO(3)$. We define 
\[ \Cay(x_1,x_2,x_3,x_4)=\left(\begin{array}{ccc}
x_1^2+x_2^2-x_3^3-x_4^2 & -2(x_1x_4-x_2x_3) & 2(x_1x_3+x_2x_4)\\
2(x_1x_4+x_2x_3) & x_1^2-x_2^2+x_3^3-x_4^2 & -2(x_1x_2-x_3x_4)\\
-2(x_1x_3-x_2x_4)& 2(x_1x_2+x_3x_4) & x_1^2-x_2^2-x_3^3+x_4^2\end{array}\right).
\]
Then provided $x_1^2+x_2^2+x_3^3+x_4^2=1$, $\Cay(x_1,x_2,x_3,x_3)\in SO(3)$, has an axis of rotation $(x_2, x_3, x_4)^T$ and
the angle of rotation $\psi$ satisfies $2 x_1^2-1 = \cos\psi$. Hence we may define, for an angle $\psi$ and axis of rotation $\textbf{a}=(a_1,a_2, a_3)^T\ne 0$, 
\[ R(\psi, \textbf{a}) = \Cay\left(\cos\left(\frac{\psi}2\right), \sin\left(\frac{\psi}2\right) \frac{a_1}{|\mathbf{a}|},\sin\left(\frac{\psi}2\right) \frac{a_2}{|\mathbf{a}|},
\sin\left(\frac{\psi}2\right) \frac{a_3}{|\mathbf{a}|}\right) \in {\rm S}O(3).\]
There are two cases. 

\noindent \textbf{Case 1}.\quad  If $\mathbf{w}_1 + \mathbf{c}_1$ is bounded away from zero,
we note that $\sigma_{RM}$ may be taken to be a product of a rotation about $\mathbf{c}_1 + {(0,0,|\mathbf{c}_1|)}^{T}$ with angle $\pi$ followed by a rotation about ${(0,0,|\mathbf{c}_1|)}^{T}$ with any angle $\psi$ and a rotation about $\mathbf{w}_1 + {(0,0,|\mathbf{c}_1|)}^{T}$ with angle $\pi$, that is,
\[ \sigma_{RM}=R(\pi, \mathbf{w}_1 + {(0,0,|\mathbf{c}_1|)}^{T})R(\psi(s), {(0,0,|\mathbf{c}_1|)}^{T})R(\pi, \mathbf{c}_1 + {(0,0,|\mathbf{c}_1|)}^{T} ).\]
This solves for $\sigma_{RM}$ up to the angle $\psi$.
If we differentiate this with respect to $s$, right multiply by $\sigma_{RM}^{-1}$
\[ \sigma^{-1}_{RM}=R(\pi, \mathbf{c}_1 + {(0,0,|\mathbf{c}_1|)}^{T})R(-\psi(s), {(0,0,|\mathbf{c}_1|)}^{T})R(\pi, \mathbf{w}_1 + {(0,0,|\mathbf{c}_1|)}^{T} )\]
using \eqref{diffv} and taking into account that
\[
\frac{{\rm d}}{{\rm d} s}(\sigma_{RM})\sigma_{RM}^{-1}=\left(\begin{array}{ccc} 0 & \kappa_1 & \kappa_2 \\ -\kappa_1 & 0 & 0 \\ -\kappa_2 & 0 & 0 \end{array}\right)
\] we obtain a remarkable equation for
$\psi$, specifically,
\begin{equation}\label{PsiEqnCaseOne}
\psi_s = - \kappa_1 +
 \frac{v_2(I)}{|\mathbf{c}_1| + v_3(I)}\, \kappa_2
\end{equation}
where recall $v_2(I)$ and $v_3(I)$ are the second and third components of the vector of invariants, $v(I)$, an also, be definition,
the second and third components of $\mathbf{w}_1$.
\noindent \textbf{Case 2}.\quad  If $\mathbf{w}_1 - \mathbf{c}_1$ is bounded away from zero, we note that $\sigma_{RM}$ may be taken to be a product of a rotation about $\mathbf{c}_1 + {(0,0,-|\mathbf{c}_1|)}^{T}$ with angle $\pi$ followed by a rotation about ${(0,0,-|\mathbf{c}_1|)}^{T}$ with any angle $\psi$ and a rotation about $\mathbf{w}_1 + {(0,0,-|\mathbf{c}_1|)}^{T}$ with angle $\pi$, that is,
\[ \sigma_{RM}=R(\pi, \mathbf{w}_1 + {(0,0,-|\mathbf{c}_1|)}^{T})R(\psi(s), {(0,0,-|\mathbf{c}_1|)}^{T})R(\pi, \mathbf{c}_1 + {(0,0,-|\mathbf{c}_1|)}^{T} ).\]
Since the matrix on the right and the matrix on the left are constant, we obtain the same equation for $\psi$ as above, but with the signs of $\mathbf{c}_1$ reversed.
Hence in this case,
\begin{equation}\label{PsiEqnCaseTwo}
\psi_s = \kappa_1 +
 \frac{v_2(I)}{|\mathbf{c}_1| - v_3(I)}\, \kappa_2.
\end{equation}

In either case, we obtain $\sigma_{RM}$ up to a quadrature. There is a significant overlap in the domains of the two cases, and matching one to the other, as needed, is not a problem.

Next, we seek $P$. We note the first row of $\sigma_{RM}$ is $P'$,
and so we can always obtain $P$ by quadrature. However, we note that only one component needs to be calculated this way, as the
second component of Equation (\ref{NCLinv}) provides algebraic equations for two of the components of $P$, i.e,
\begin{align*}
X &=\frac{1}{v_3(I)} (v_4(I) + Zv_2(I) - {(\sigma D {\bf c_2})}_1), \\
Y &= \frac{1}{v_3(I)} (v_5(I) + Zv_1(I) + {(\sigma D {\bf c_2})}_2) \\
\end{align*}
where $Z$ has been solved previously by quadrature.

We conclude by noting that the conservation laws provide two first integrals of the Euler--Lagrange equations.
They may be used to solve for $P$ in terms of two quadratures, and they also solve for the normal vector $V$ in terms of
one quadrature, that of $\psi$.
Finally, we note that it is easy to obtain the \FS frame from our calculations, since it is defined in terms of $P'$ and $P''$.

\section{Examples and applications}\label{EXsection}

We  examine a Lagrangian which is not possible to study in the \FS framework. Secondly, we study functionals used to model some biological structures, invariant under ${\rm S}E(3)$ and depending on the curvature, torsion and their derivatives, but  using our results for the \RM  frame.

We first show that every Lagrangian which can be written in terms of the Euclidean curvature $\kappa$ and  torsion $\tau$ can be written
in terms of the invariants, $\kappa_1$ and $\kappa_2$.
From \eqref{k1k2} we have that
\[
\kappa_1 = \kappa\cos{\theta}, \qquad \kappa_2 = \kappa \sin{\theta}
\]
and therefore, using $\tan\theta = \kappa_2/\kappa_1$ and $\theta_s=\tau$ we have, 
\begin{equation}\label{RMtoFS}
\kappa=\sqrt{\kappa_1^2 + \kappa_2^2}, \qquad \tau = \frac{\kappa_1 \kappa_{2,s} - \kappa_{1,s}\kappa_2}{\kappa_1^2 + \kappa_2^2}.
\end{equation}

But the converse is not true. Lagrangians which depend only on $\kappa_2/\kappa_1$ cannot be written in terms of $\kappa$ and $\tau$.
Our first example is the simplest such Lagrangian, which we study simply because we can.

\subsection{Invariant Lagrangians involving only  $\kappa_2/\kappa_1$}\label{tantheta}

Let us consider the Lagrangian
\begin{align*}
\mathcal{L}[\kappa_2/\kappa_1]  
&= \int \frac{1}{2} {\left( \frac{\kappa_2}{\kappa_1}\right)}^2 + \lambda \left(\eta - 1 \right) + \mu \zeta \ {\rm d}s\\
&=\int \tan{\theta}^2 + \lambda \left(\eta - 1 \right) + \mu \zeta \ {\rm d}s
\end{align*}
where recall $\eta=1$ is the arc-length constraint and $\zeta=0$ is the \RM constraint.

Using the results of the previous section,
we obtain the Euler--Lagrange equations
\begin{equation}\label{tanEL1}
\begin{aligned}
&\left( -\frac{12 {\frac{{\rm d}}{{\rm d} s}\kappa_1}^2}{\kappa_1^5} +\frac{3 \frac{{\rm d}^2}{{\rm d} s^2}\kappa_1}{\kappa_1^4} - \frac{1}{2\kappa_1} \right) \kappa_2^2 +
 \left( \frac{12 {\frac{{\rm d}}{{\rm d} s}\kappa_1}{\frac{{\rm d}}{{\rm d} s}\kappa_2}}{\kappa_1^4} -\frac{2 \frac{{\rm d}^2}{{\rm d} s^2}\kappa_2}{\kappa_1^3} - \mu_s \right) \kappa_2 \\&
\qquad \qquad -\frac{2\frac{{\rm d}}{{\rm d} s}\kappa_2^2}{\kappa_1^3}+\mu \frac{{\rm d}}{{\rm d} s} \kappa_2 = 0,\\
 \end{aligned}
 \end{equation}
 \begin{align}
& \label{tanEL2}-\frac{\kappa_2^3}{2\kappa_1^2} + \left( \frac{6 \frac{{\rm d}}{{\rm d} s}\kappa_1^2}{\kappa_1^4} - \frac{2\frac{{\rm d}^2}{{\rm d} s^2}\kappa_1}{\kappa_1^3} \right)\kappa_2 \frac{\frac{{\rm d}^2}{{\rm d} s^2} \kappa_2}{\kappa_1^2} -\frac{4\frac{{\rm d}}{{\rm d} s}\kappa_1\frac{{\rm d}}{{\rm d} s}\kappa_2}{\kappa_1^3}-\mu_s \kappa_1 -\mu \frac{{\rm d}}{{\rm d} s}\kappa_1 = 0,\\
&\mu_s + \frac{\kappa_2^3}{\kappa_1^3} + \frac{\kappa_2}{\kappa_1} =0 
\end{align}
where $\lambda= \frac{1}{2}{\left(\frac{\kappa_2}{\kappa_1}\right)}^2$ has been solved using \eqref{lambda}.
Further, the vector of invariants $v(I)$ needed for the conservation laws is
\[
v(I)=\left( \begin{array}{c} \frac{1}{2}{\left(\frac{\kappa_2}{\kappa_1}\right)}^2 \\
-\frac{\kappa_2}{\kappa_1^4}(\kappa_1^4\mu-2\kappa_1\frac{{\rm d}}{{\rm d} s}\kappa_2 + 3\kappa_2 \frac{{\rm d}}{{\rm d} s}\kappa_1)\\
-\frac{\frac{{\rm d}}{{\rm d} s}\kappa_2}{\kappa_1^2} + \frac{2 \kappa_2\frac{{\rm d}}{{\rm d} s}\kappa_1}{\kappa_1^3} + \mu\kappa_1\\
\mu \\
\frac{\kappa_2}{\kappa_1^2}\\
-\frac{\kappa_2^2}{\kappa_1^3}
\end{array}\right).
\]

Solving \eqref{tanEL1}, \eqref{tanEL2} along with \eqref{SecondIntELeqn}, \eqref{PsiEqnCaseOne} and \eqref{PsiEqnCaseTwo} for $\kappa_1,\kappa_2,\mu$ and $\psi$ with initial conditions
\[
\begin{array}{llll}
\kappa_1(0) = 1, &\kappa_2(0) = \frac{1}{2}, &\frac{{\rm d}}{{\rm d} s}\kappa_1(0) = 1,& \frac{{\rm d}}{{\rm d} s}\kappa_2(0) = 1,\\[10pt]
\lambda(0) = 1, &\mu(0) = 1, &Z(0)=1, &\psi(0)=0
\end{array}
\]
we obtain the following solutions, see Figures \ref{App0a},   \ref{App0b},  \ref{App0c}.

\begin{figure}
\caption{Solutions for the invariants $\kappa_1$, $\kappa_2$, $\theta$ and $\kappa$\label{App0a}}
 \vspace{5mm}
\begin{tabular}{ccc}

    \includegraphics[width=0.3\textwidth]{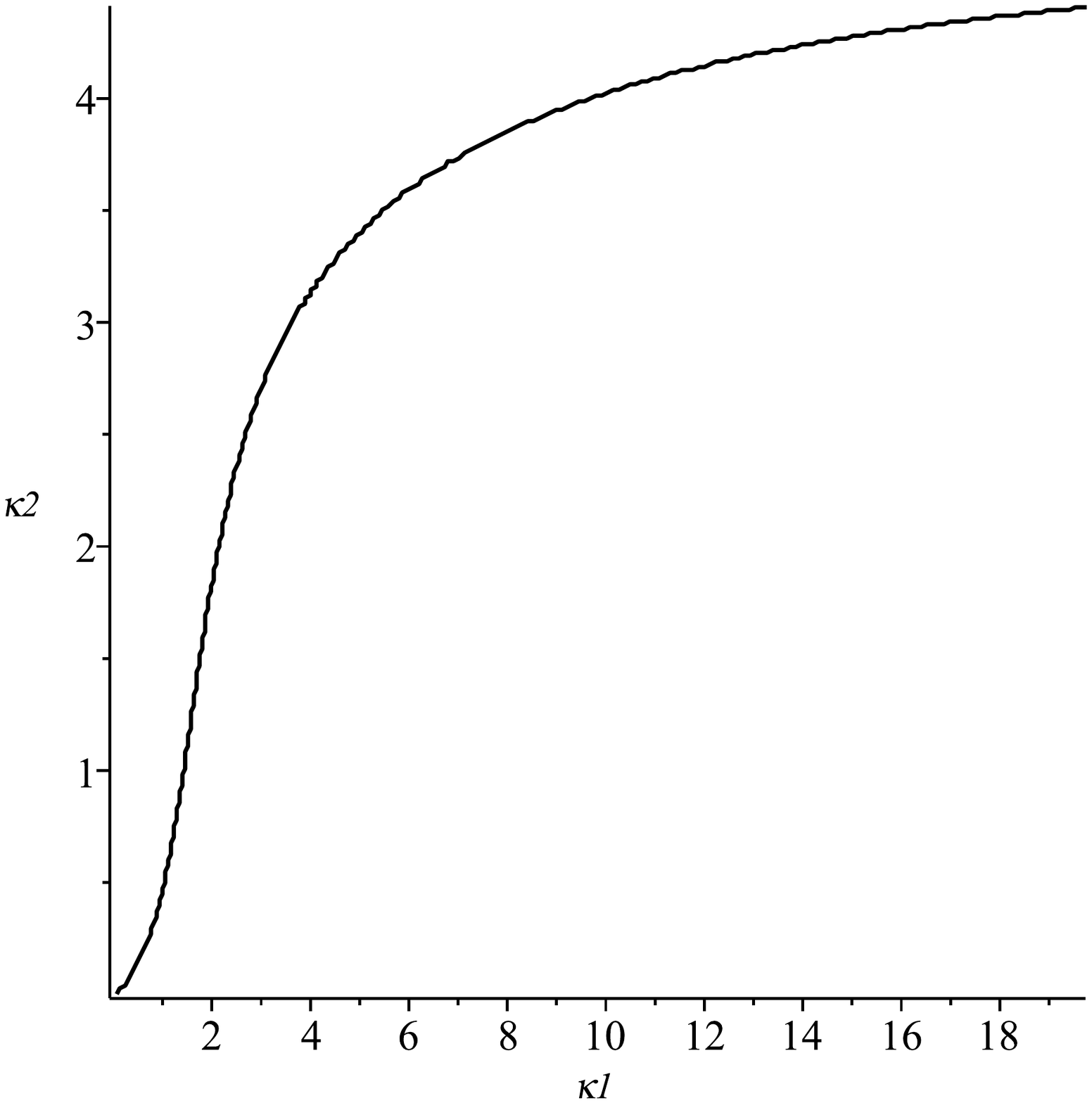} &

    \includegraphics[width=0.3\textwidth]{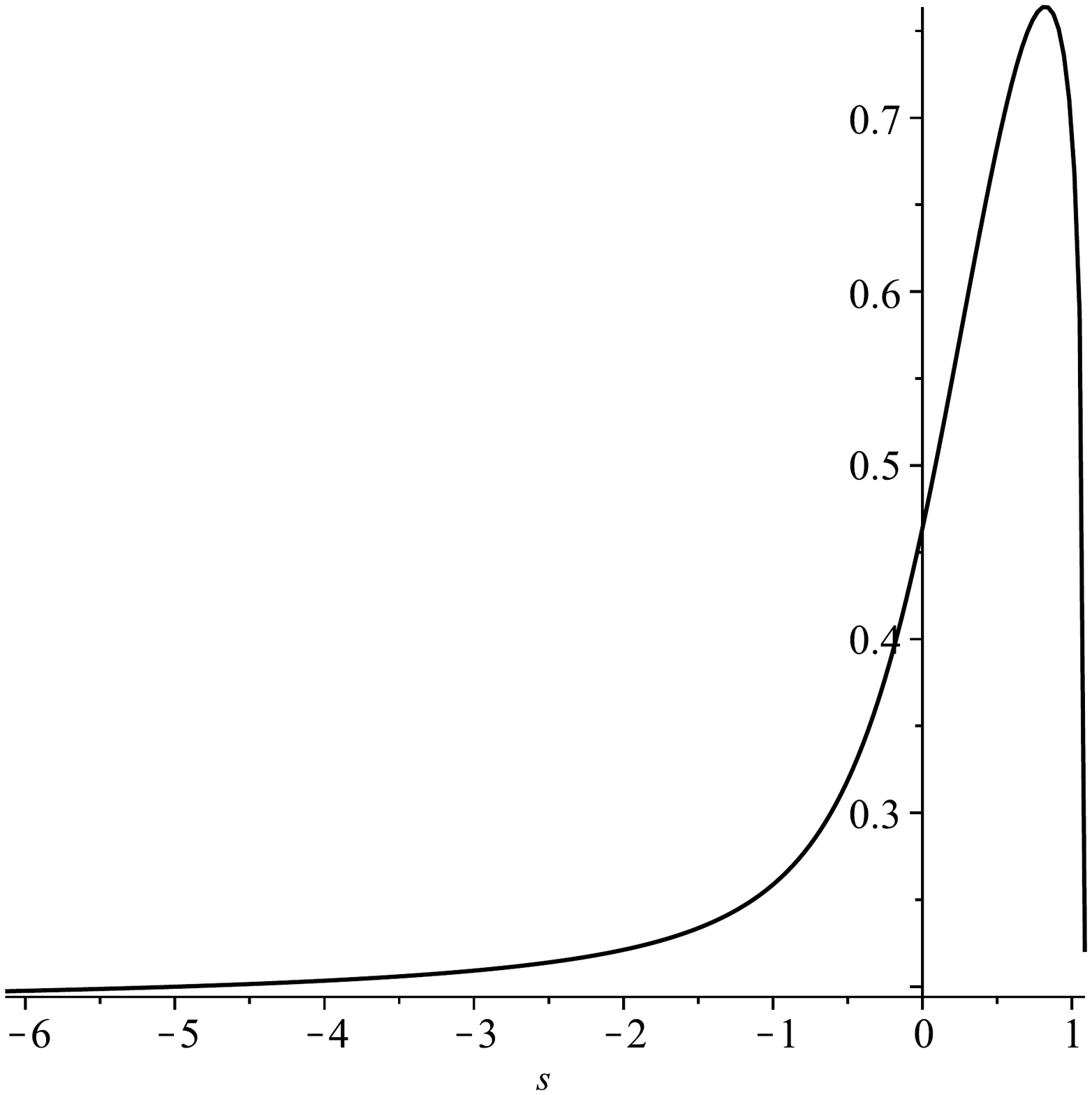} &

    \includegraphics[width=0.3\textwidth]{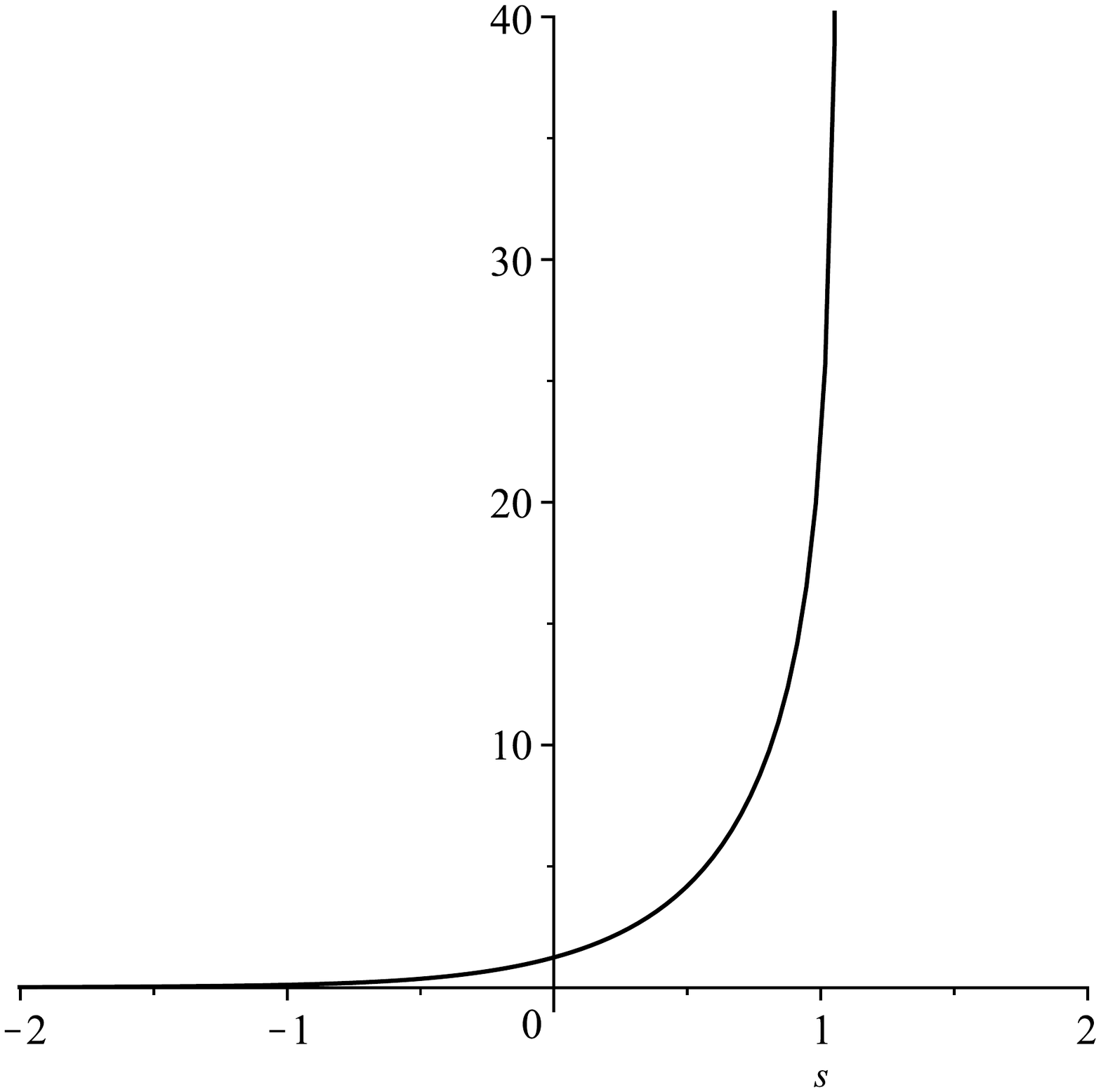}\\
     $\kappa_1$ vs $\kappa_2$ & $s$ vs $\theta$ & $s$ vs $\kappa$ \\
\end{tabular}
\end{figure}

\begin{figure}[ht!]
\begin{center}
\caption{Plots of the first integrals\label{App0b}}
 \vspace{5mm}
\begin{tabular}{cc}

    \includegraphics[width=0.3\textwidth]{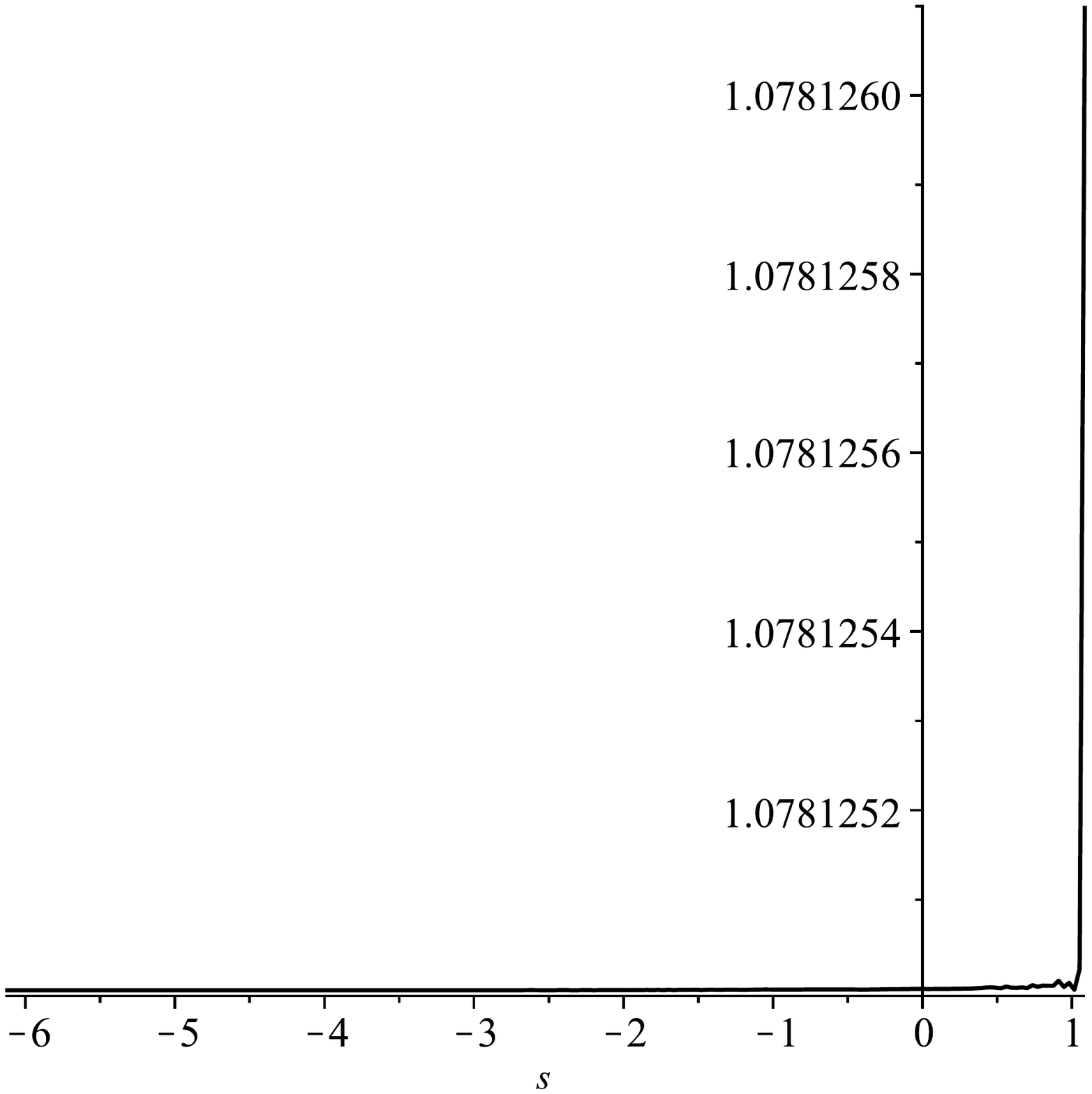} &

    \includegraphics[width=0.3\textwidth]{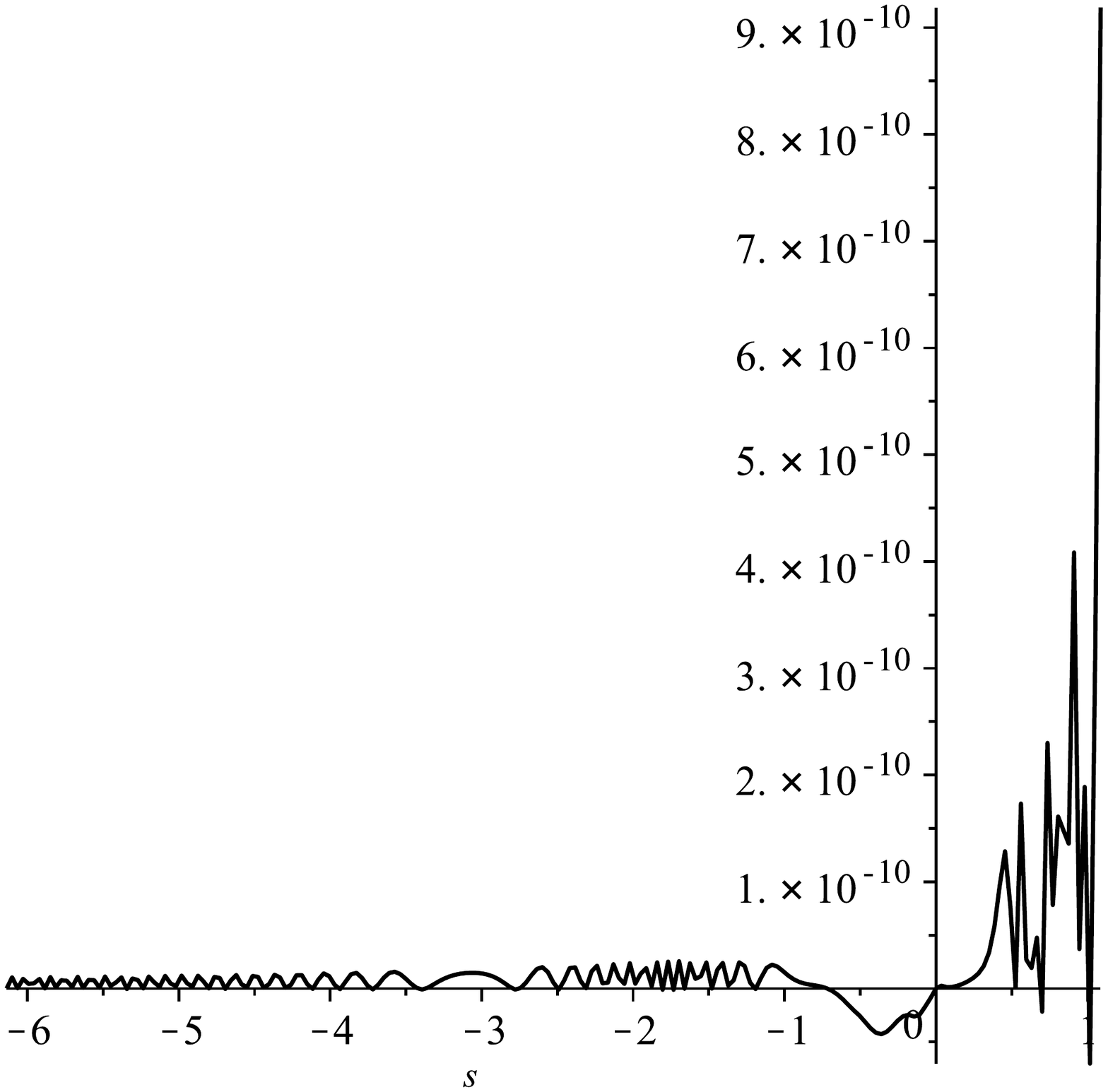} \\
     $v_1^2+v_2^2+v_3^2=c_1^2+c_2^2+c_3^3$  & $v_1v_4-v_2v_5+v_3v_6=c_1c_4-c_2c_5+c_3c_6$ \\
\end{tabular}
\end{center}
\end{figure}

\begin{figure}
    \caption{Sweep surfaces using the \RM frame and the \FS frame along the extremal curve\label{App0c}}
    \vspace{5mm}
    \centering
   \begin{tabular}{cc}

    \includegraphics[width=0.45\textwidth]{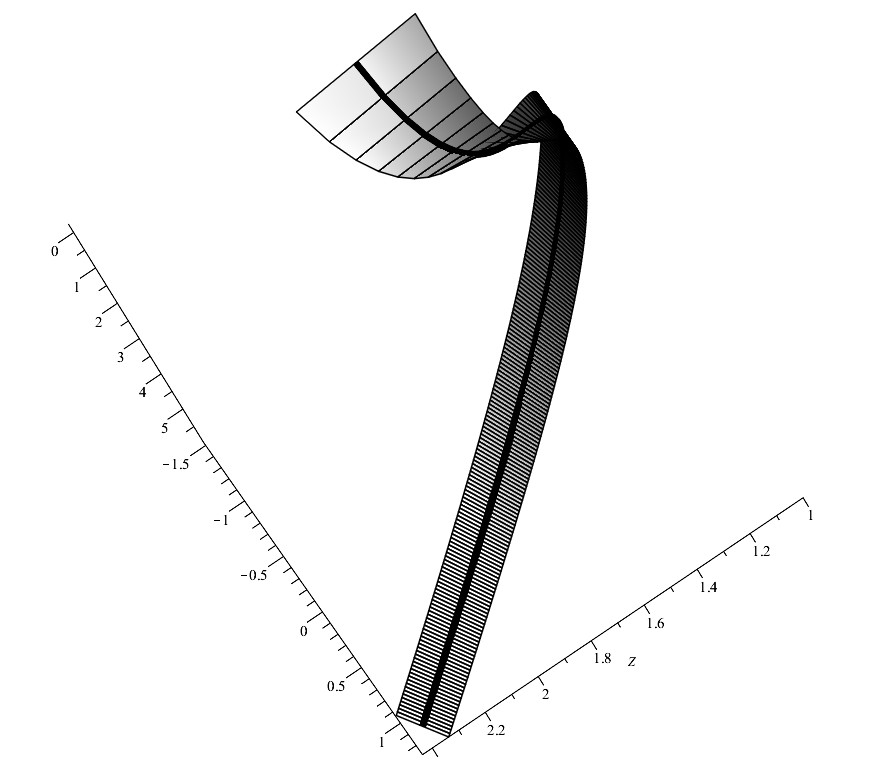} &

    \includegraphics[width=0.4\textwidth]{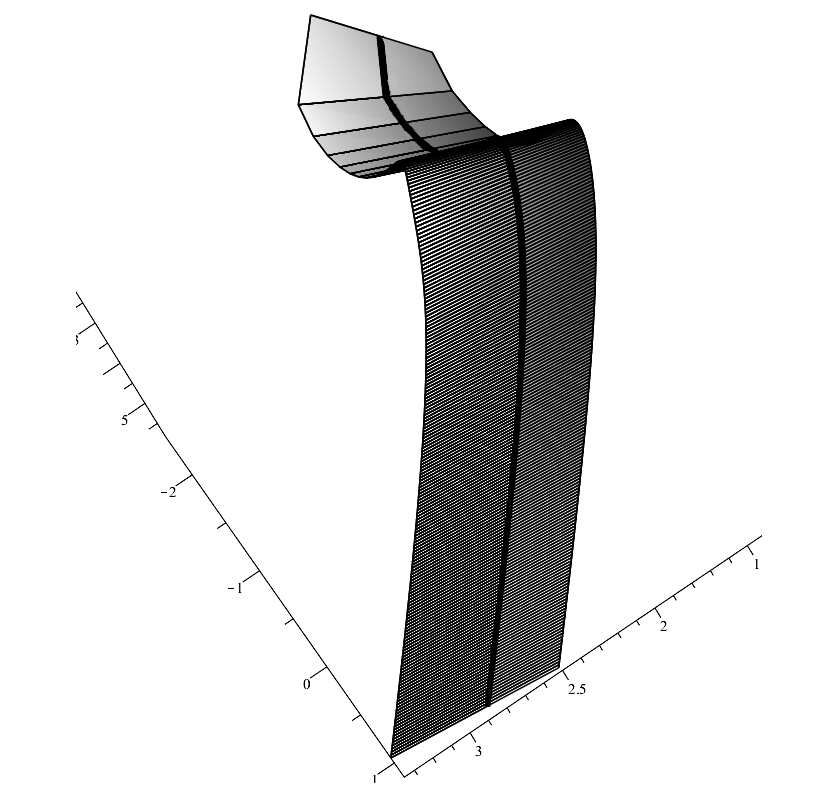} \\
   Plot of $V$ along the extremal curve  & Plot of $P''$ along the extremal curve \\
   using the \RM frame  & using the Frenet--Serret frame \\
\end{tabular}
\end{figure}

\pagebreak

\subsection{Further examples}
In order to model strands of proteins, nucleid acids and polymers,  some authors have made use of the classic calculus of variations and studied the Euler--Lagrange equations of an energy functional depending on the curvature, torsion and  their first derivatives.
In \cite{McCoy} and \cite{TMH} the authors consider protein backbones and polymers as a smooth curve in $\mathbb{R}^3$ and use the Frenet--Serret equations in order to compute a variation to the curve.  The Euler--Lagrange equations are obtained for these type of functionals. In \cite{FNS} the same method is used to obtain the Euler--Lagrange equations for functionals which are linear in the curvature.
 In this section we study two examples from the families of functionals studied, but in terms of the invariants $\kappa_1$ and $\kappa_2$. 
The conversion of a functional given in terms of Euclidean curvature and torsion to one given in terms of $\kappa_1$ and $\kappa_2$ is given in Equation \eqref{RMtoFS}.

\subsubsection{The Lagrangian $\int \kappa^2 \tau \ {\rm d}s = \int \kappa_1 \kappa_{2,s}-\kappa_{1,s}\kappa_2 \ {\rm d}s$}

For the Lagrangian 
\[
\int \kappa_1 \kappa_{2,s} - \kappa_{1,s}\kappa_2  \ {\rm d}s
\]
the Euler--Lagrange equations are
\begin{align}\label{eulerlagrangelag2a}
2 \kappa_{2,sss}+3\kappa_{2,s}\kappa^2&=0,\\\label{eulerlagrangelag2b}
-2 \kappa_{1,sss}-3\kappa_{1,s}\kappa^2&=0.
\end{align}
The conservation laws are of the form \eqref{conservatiolaws} where
\[
v(I)=(2(\kappa_{1,s}\kappa_2-\kappa_1\kappa_{2,s}) \quad  -2\kappa_{2,ss} - \kappa_2\kappa^2  \quad  -2\kappa_{1,ss} + \kappa_1\kappa^2 \quad  \kappa^2 \quad -2\kappa_{1,s}  \quad 2\kappa_{2,s} )^T.
\]

Solving \eqref{eulerlagrangelag2a}, \eqref{eulerlagrangelag2b} along with  \eqref{PsiEqnCaseOne} and \eqref{PsiEqnCaseTwo} for 
$\kappa_1,\kappa_2$ and $\psi$ with initial conditions
\[
\begin{array}{llll}
\kappa_1(0) = 1, &\kappa_2(0) = \frac{1}{2}, &\frac{{\rm d}}{{\rm d} s}\kappa_1(0) = 1,& \frac{{\rm d}}{{\rm d} s}\kappa_2(0) = 1,\\[10pt]
\frac{{\rm d}^2}{{\rm d} s^2}\kappa_1(0) = 1,& \frac{{\rm d}^2}{{\rm d} s^2}\kappa_2(0) = 1, &\psi(0)=0 &
\end{array}
\]
and integrating to obtain the extremizing curve and its \RM frame, we obtain the following solutions, see Figures \ref{App1a}, \ref{App1b}, \ref{App1c}.

\begin{figure}
\caption{Solutions for the invariants $\kappa_1$, $\kappa_2$, $\theta$ and $\kappa$\label{App1a}}
\begin{tabular}{ccc}

    \includegraphics[width=0.3\textwidth]{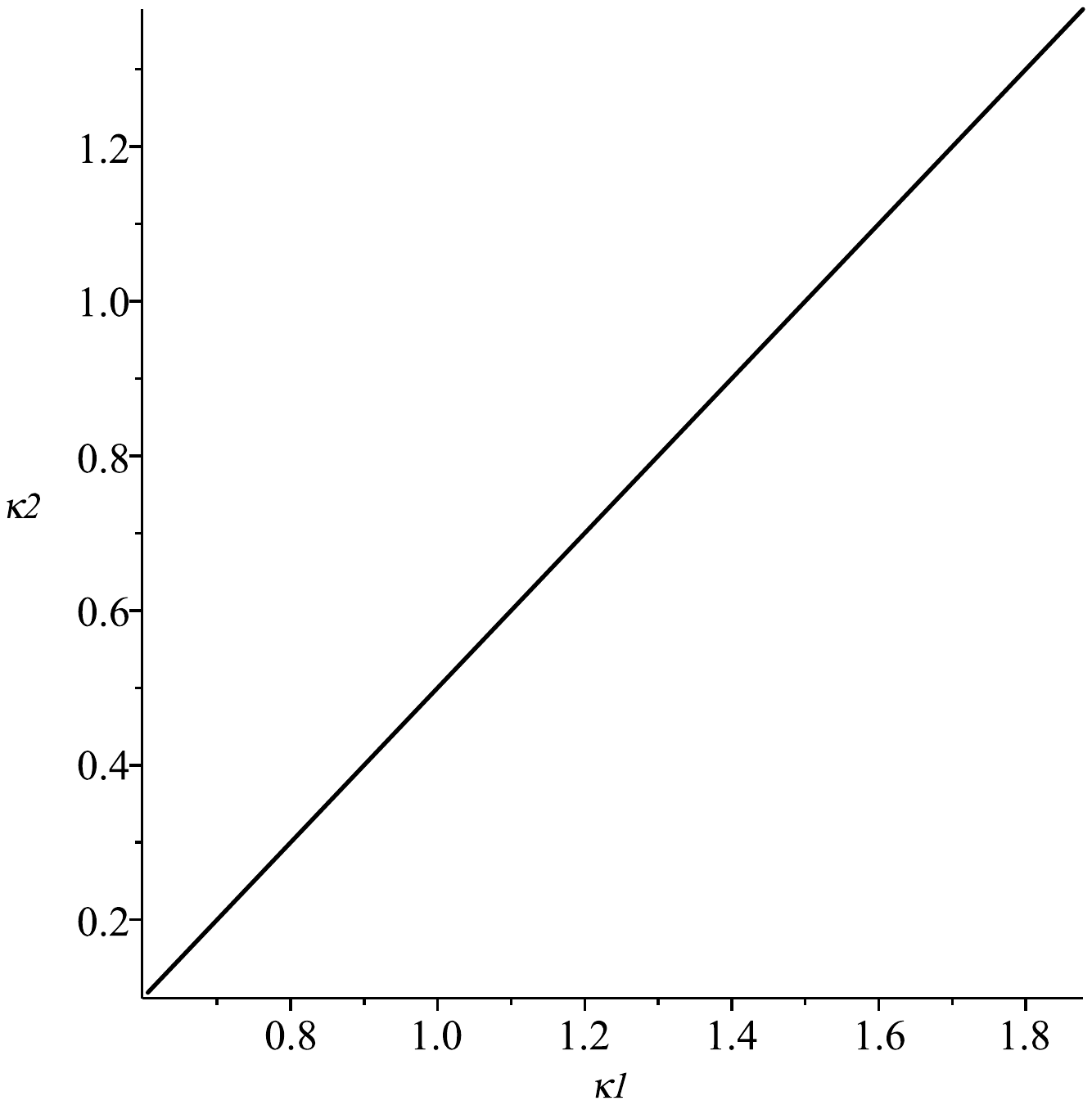} &

    \includegraphics[width=0.3\textwidth]{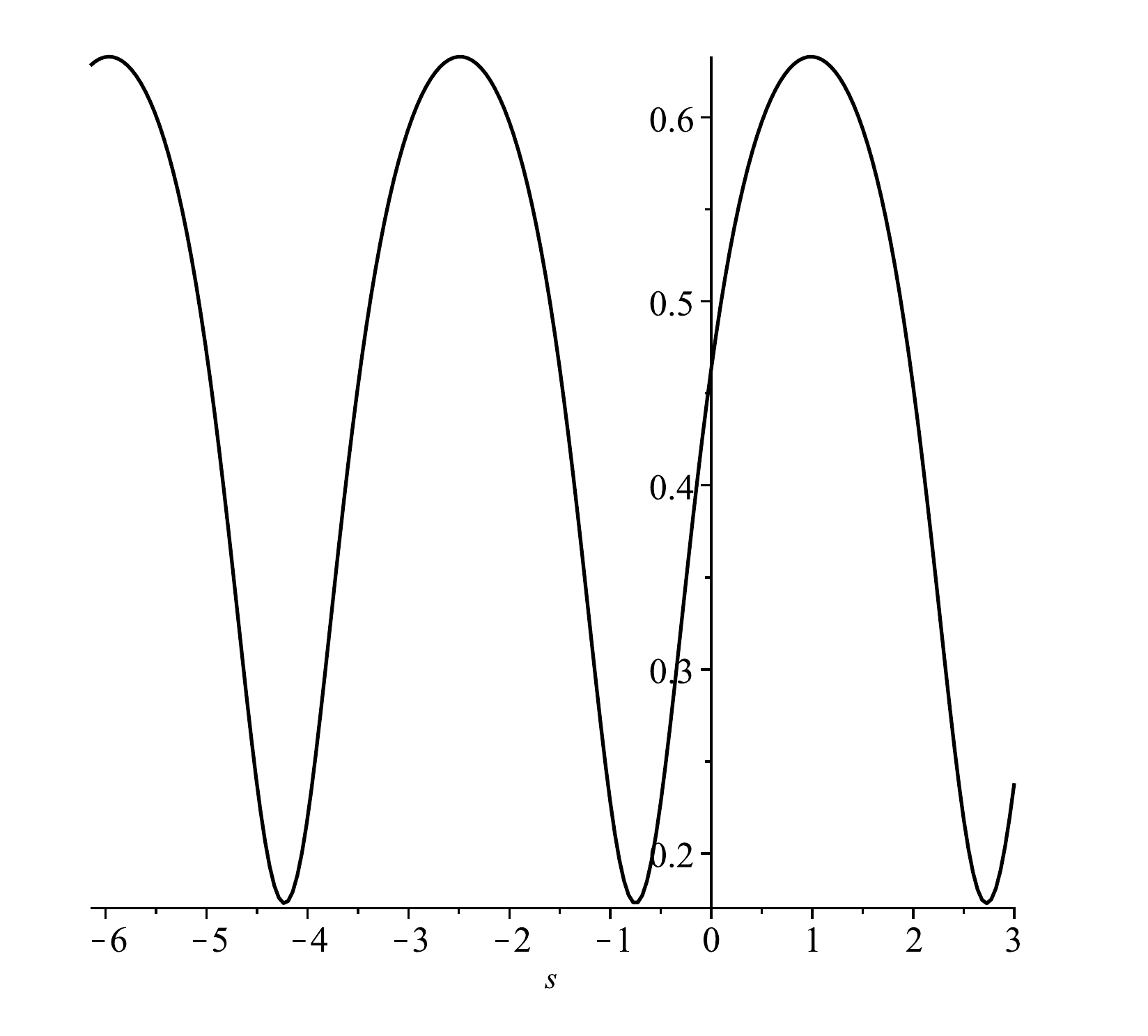} &

    \includegraphics[width=0.3\textwidth]{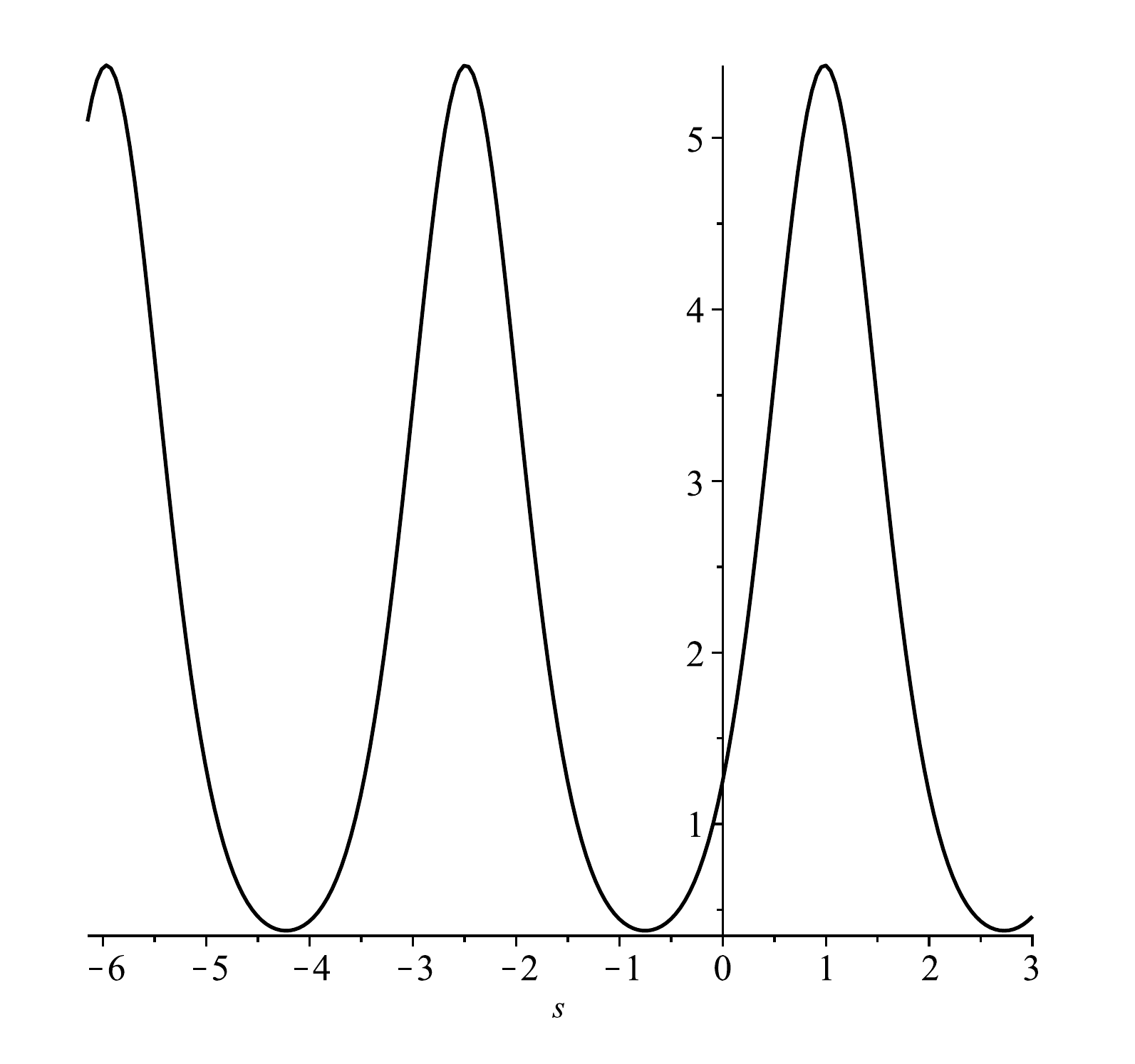}\\
     $\kappa_1$ vs $\kappa_2$ & $s$ vs $\theta$ & $s$ vs $\kappa$ \\
\end{tabular}

\begin{center}
\caption{Plots of the conservation laws\label{App1b}}
\begin{tabular}{cc}

    \includegraphics[width=0.3\textwidth]{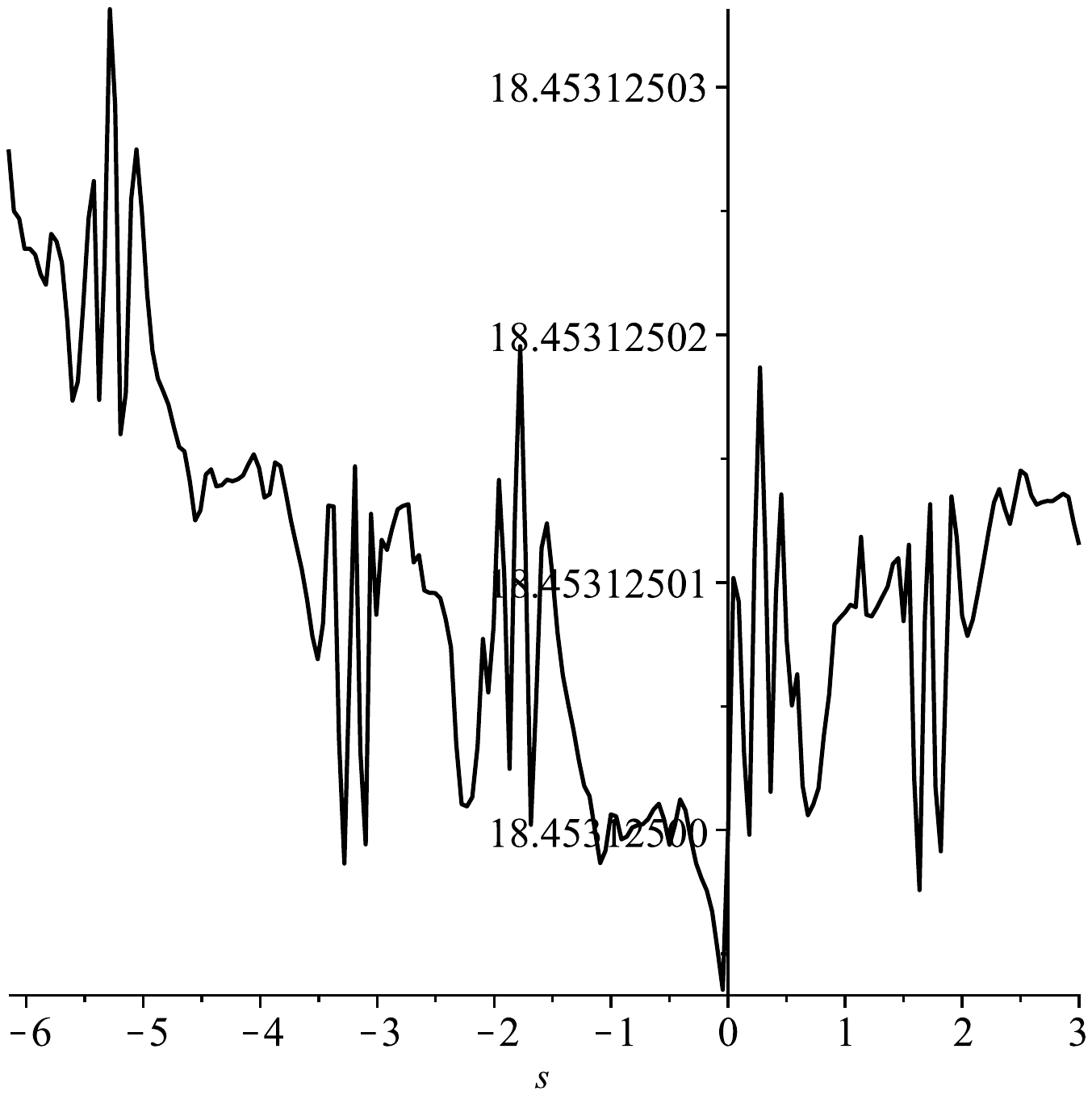} &

    \includegraphics[width=0.3\textwidth]{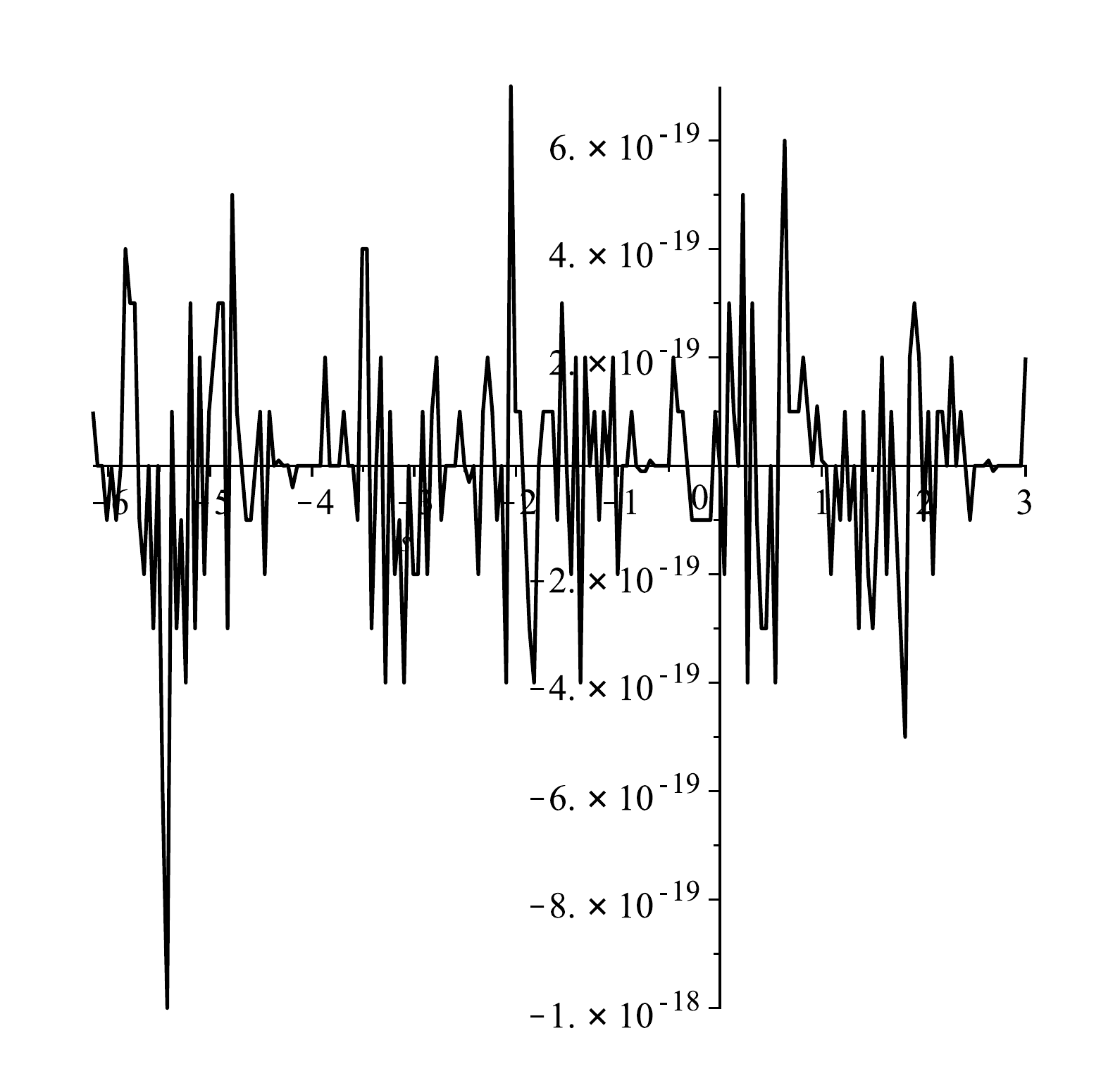} \\
     $v_1^2+v_2^2+v_3^2=c_1^2+c_2^2+c_3^3$  & $v_1v_4-v_2v_5+v_3v_6=c_1c_4-c_2c_5+c_3c_6$ \\
\end{tabular}
\end{center}

    \caption{Sweep surface using $V$ from the \RM frame along the extremal curve\label{App1c}}
    
    \vspace{5mm}
 \centering
    \includegraphics[width=0.6\textwidth]{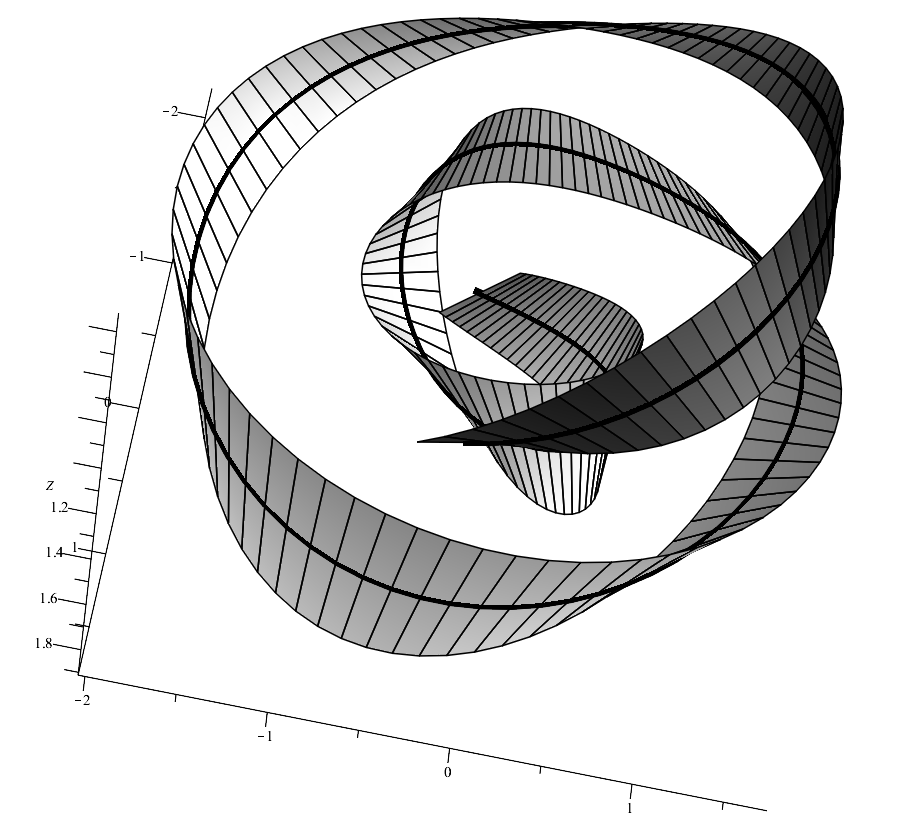}
\end{figure}

\subsubsection{The Lagrangian $\int \kappa_{1,s} \kappa_{2,ss} - \kappa_{1,ss}\kappa_{2,s}  \ {\rm d}s$}

We  now consider
\[
\int \kappa_{1,s} \kappa_{2,ss} - \kappa_{1,ss}\kappa_{2,s}  \ {\rm d}s
\]
In terms of the Euclidean curvature and torsion, this Lagrangian can be written as
$\int \left(\kappa^2\tau^3 + \tau\left(2\kappa_s^2-\kappa\kappa_{ss}\right)+\kappa\kappa_s\tau_s\right)\,{\rm d}s$.

The Euler--Lagrange equations are
\begin{align}\label{eulerlagrangelag3a}
-2\kappa_{2,ssss}+\frac{{\rm d}}{{\rm d} s}(\kappa_2 \mu)-\kappa_1\lambda&=0,\\\label{eulerlagrangelag3b}
2\kappa_{1,ssss}-\frac{{\rm d}}{{\rm d} s}(\kappa_1 \mu)-\kappa_2\lambda&=0
\end{align}
where
\[
\lambda =  2\kappa_{2,sss}\kappa_1 - 2\kappa_{1,s}\kappa_{2,ss} + 2\kappa_{2,s}\kappa_{1,ss}-2\kappa_2\kappa_{1,sss}
\]
and
\[
\mu=\kappa_{1,s}^2+\kappa_{2,s}^2-2(\kappa_1\kappa_{1,ss}+\kappa_2\kappa_{2,ss}).
\]

The conservation laws are of the form \eqref{conservatiolaws} where
\[
v(I)=(\lambda \quad  2\kappa_{2,ssss} - \mu\kappa_2  \quad  -2\kappa_{1,ssss} + \mu\kappa_1 \quad  \mu \quad 2\kappa_{1,sss}  \quad -2\kappa_{2,sss} ).
\]

Solving \eqref{eulerlagrangelag3a}, \eqref{eulerlagrangelag3b} along with  \eqref{PsiEqnCaseOne} and \eqref{PsiEqnCaseTwo} for $\kappa_1,\kappa_2$ and $\psi$ with initial conditions
\[
\begin{array}{lllll}
\kappa_1(0) = 1, &\kappa_2(0) = \frac{1}{2}, &\frac{{\rm d}}{{\rm d} s}\kappa_1(0) = 1,& \frac{{\rm d}}{{\rm d} s}\kappa_2(0) = 1,&\\[10pt]
\frac{{\rm d}^2}{{\rm d} s^2}\kappa_1(0) = 1,& \frac{{\rm d}^2}{{\rm d} s^2}\kappa_2(0) = 1, & \frac{{\rm d}^3}{{\rm d} s^3}\kappa_1(0) = 1,& \frac{{\rm d}^3}{{\rm d} s^3}\kappa_2(0) = 1, &\psi(0)=0 
\end{array}
\]
and integrating to obtain the extremizing curve and its \RM frame, we obtain the following solutions,
 see Figures \ref{App2a}, \ref{App2b}, \ref{App2c}.

\begin{figure}
\caption{Solutions for the invariants $\kappa_1$,$\kappa_2$,$\theta$ and $\kappa$\label{App2a}}
\begin{tabular}{ccc}

    \includegraphics[width=0.3\textwidth]{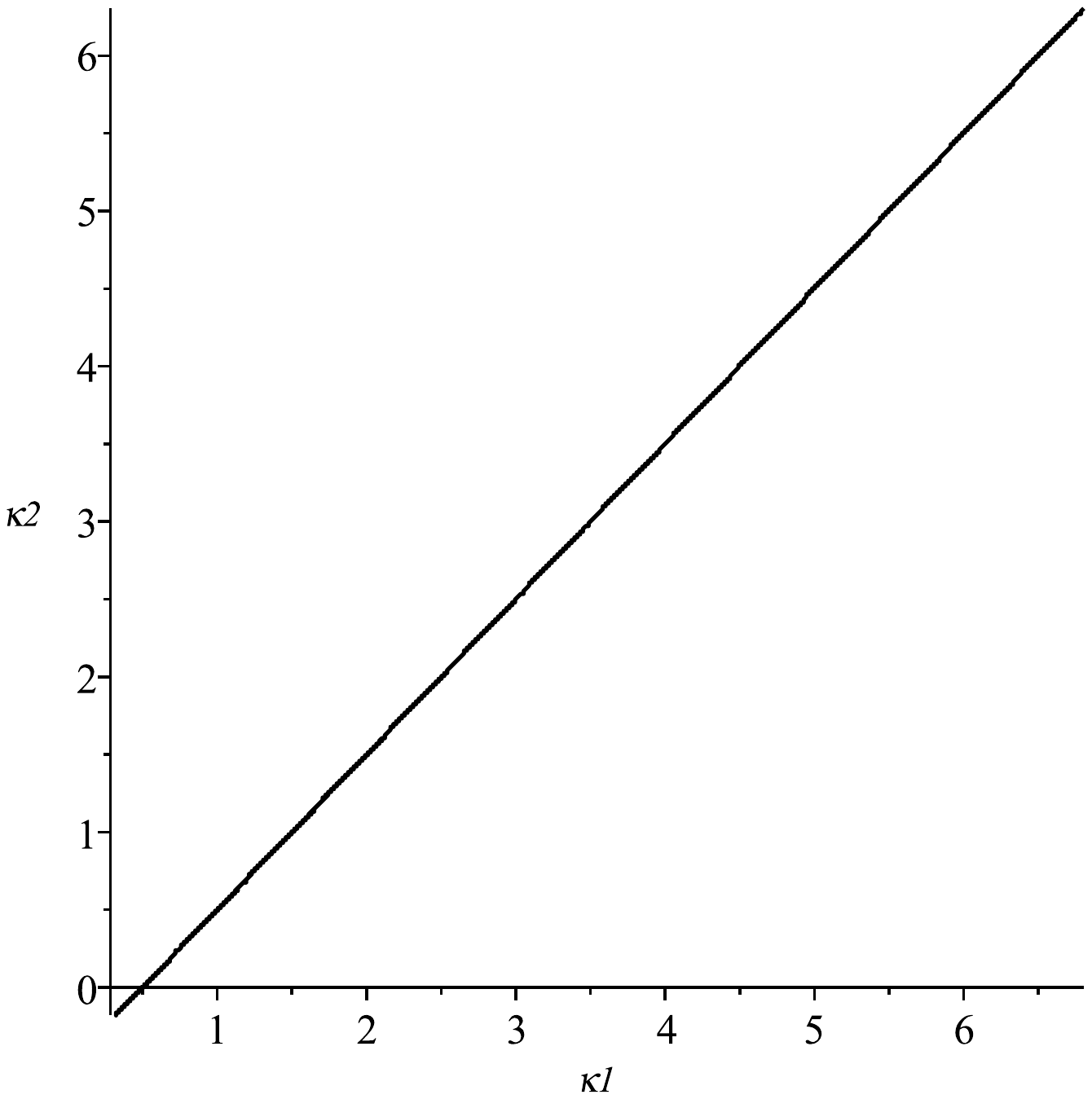} &

    \includegraphics[width=0.3\textwidth]{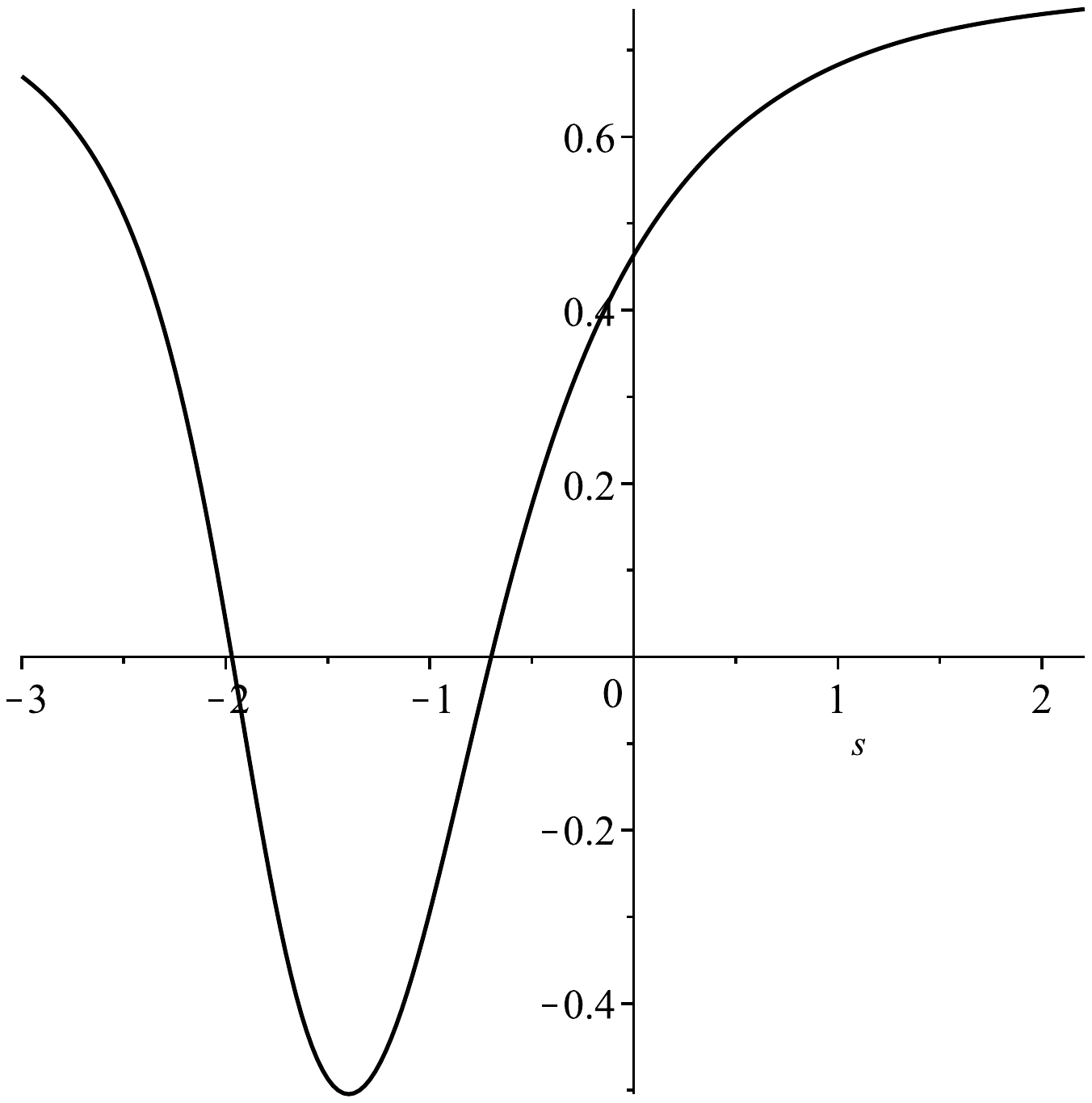} &

    \includegraphics[width=0.3\textwidth]{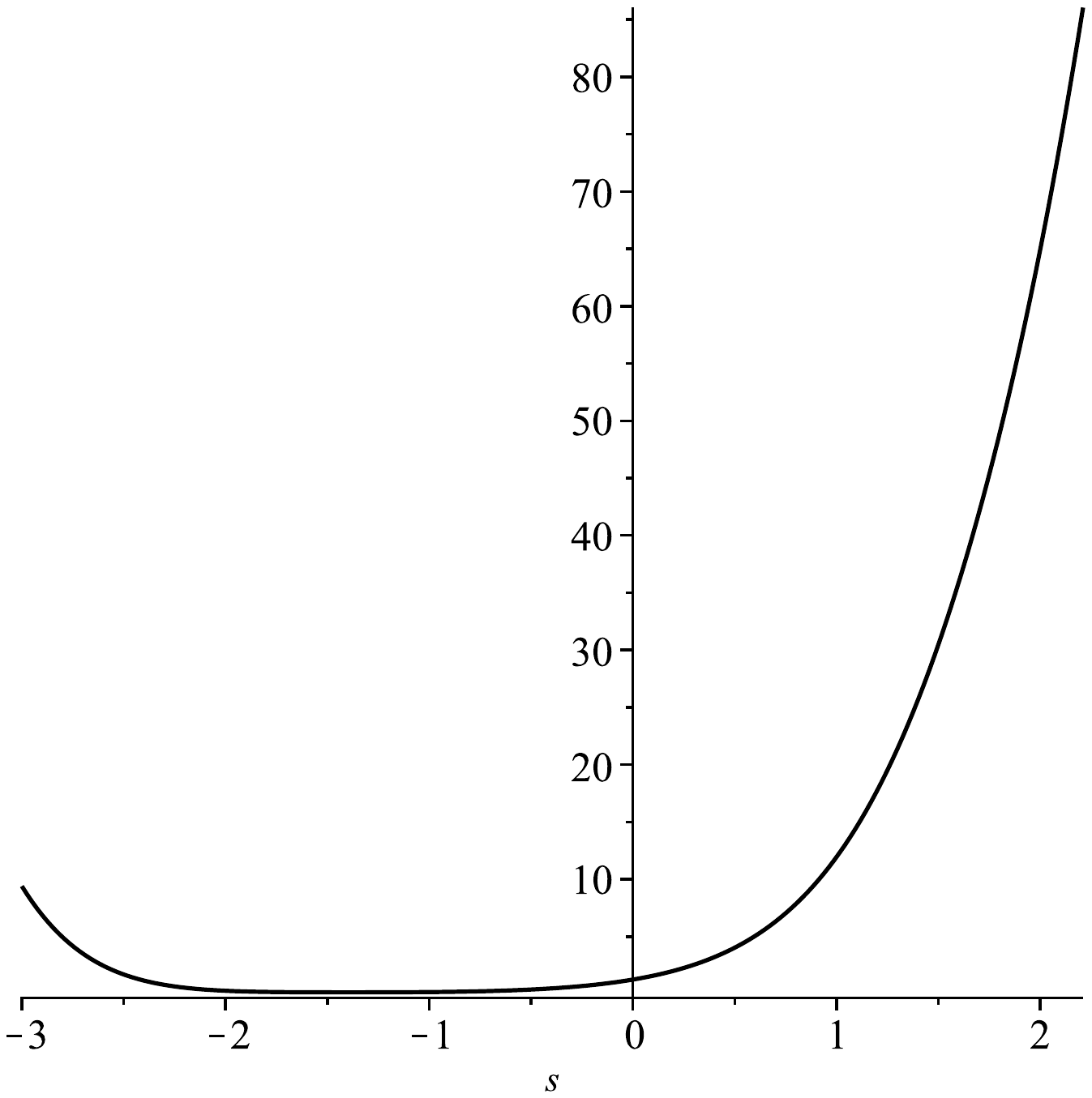}\\
     $\kappa_1$ vs $\kappa_2$ & $s$ vs $\theta$ & $s$ vs $\kappa$ \\
\end{tabular}

\begin{center}
\caption{Plots of the first integrals\label{App2b}}
\begin{tabular}{cc}

    \includegraphics[width=0.3\textwidth]{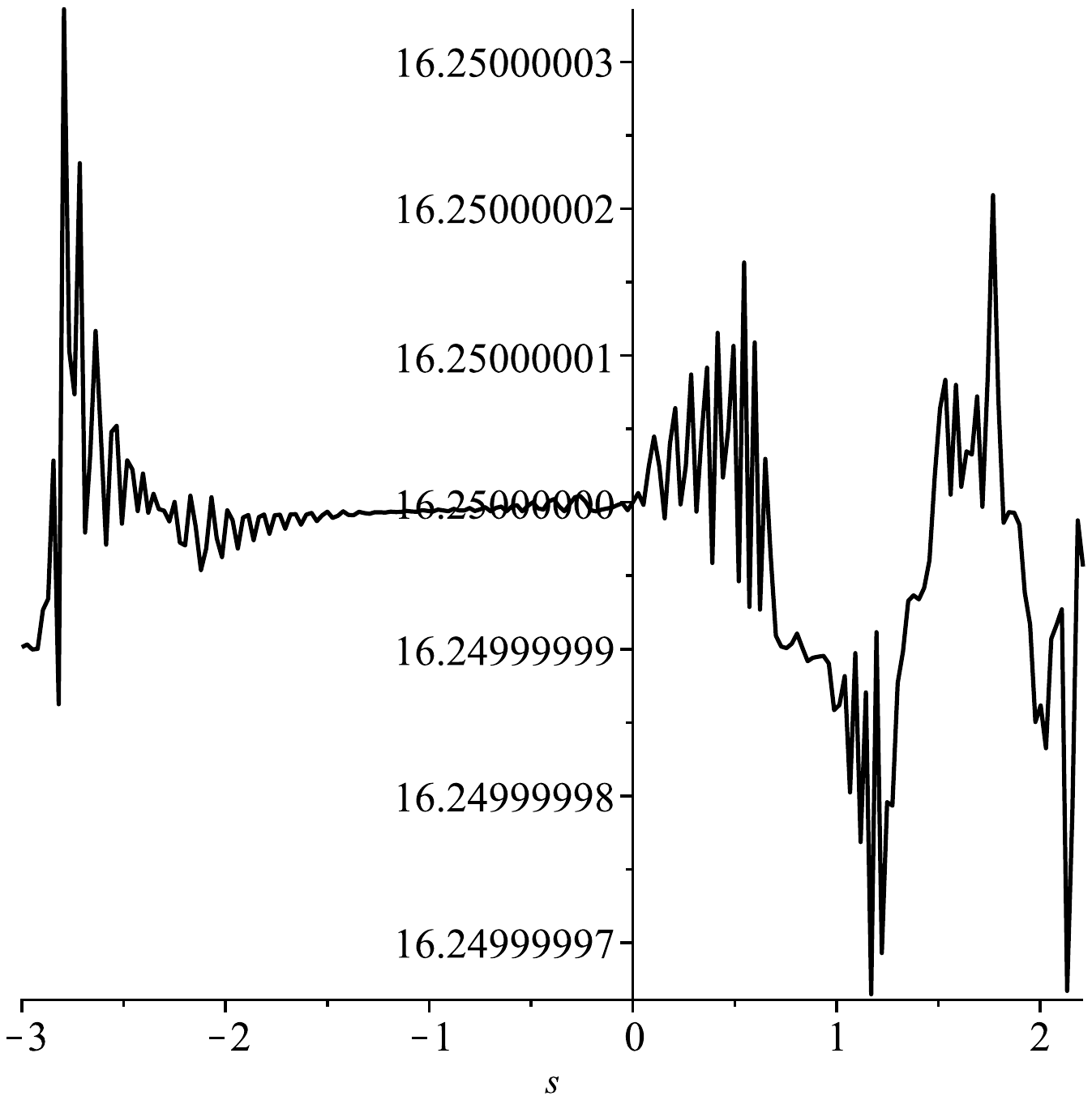} &

    \includegraphics[width=0.3\textwidth]{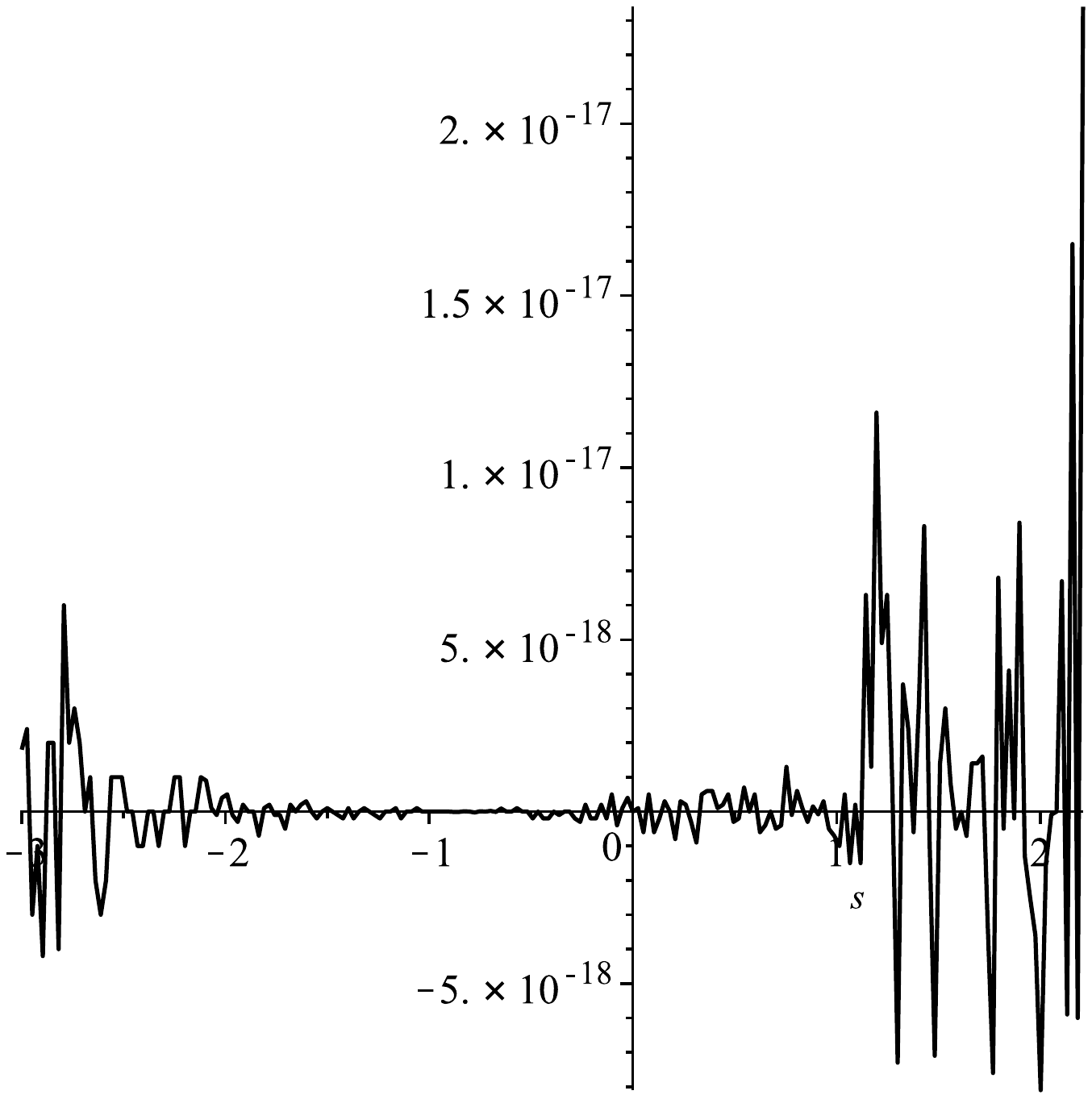} \\
     $v_1^2+v_2^2+v_3^2=c_1^2+c_2^2+c_3^3$  & $v_1v_4-v_2v_5+v_3v_6=c_1c_4-c_2c_5+c_3c_6$ \\
\end{tabular}
\end{center}

    \caption{Sweep surface using $V$ from the \RM frame along the extremal curve.\label{App2c}}
    
    \vspace{5mm}
 \centering
    \includegraphics[width=0.5\textwidth]{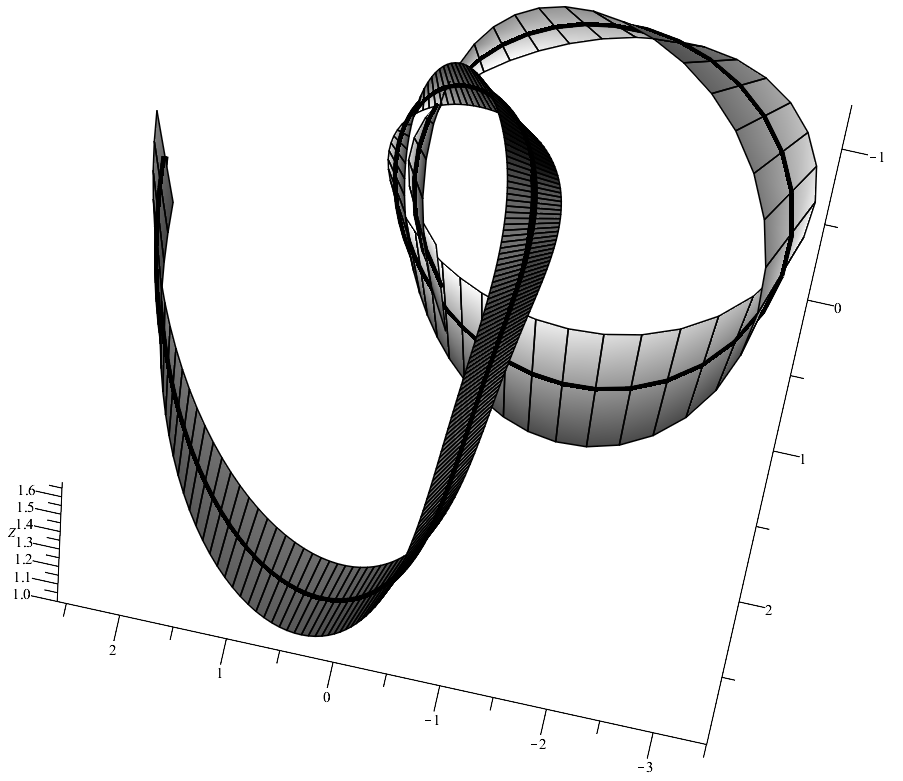}
\end{figure}

\section{Conclusions}\label{CCsection}

In this paper we have developed the Calculus of Variations for invariant Lagrangians under the Euclidean action of rotations and translations on curves in 3-space, using the \RM frame. We obtain the  Euler-Lagrange equations in their invariant form and their corresponding conservation laws. These results yield an easier form than those obtained in \cite{GonMan3}. We also show how to ease the integration problem using the conservation laws and to recover the extremals in the original variables. We show how to minimize the angle between the normal and binormal vector and give an application in the study of biological problems.

It is clear that our results can be generalized to obtain a symbolic calculus of invariants for a broad class of problems in which the frame is not
defined in terms of algebraic equations, in the coordinates of the manifold on which the Lie group actions. This is a topic for further study.

Future work would include the construction of a discrete \RM frame and obtaining the invariant Euler-Lagrange equations and conservation laws using the discrete invariant calculus of variations developed in \cite{newpaper}. The investigation of the minimization of functionals that are invariant under higher dimensional Euclidean actions is also of interest well as the study of joint invariants in problems where two helices appear and interact with each other. 

\section*{Acknowledgements}
The authors would like to thank Evelyne Hubert for pointing out the use of the \RM frame in the Computer Aided Design literature, and the Maplesoft Customer Support for help with the plots of the sweep surfaces, which were performed using Maple 2018. The second author 
 would like to thank the SMSAS department at the University of Kent and the EPSRC (grant EP/M506540/1) for generously funding this research.

\end{document}